\def\paragraph{\@startsection{paragraph}{4}%
  \z@\z@{-\fontdimen2\font}%
  {\normalfont\bfseries}}
\newcommand{\para}[1]{\medskip\noindent\textbf{#1.}}
\renewcommand \v{\vert}
\renewcommand \Im{\text{Im}}
\newcommand \Q{\mathbb{Q}}
\newcommand \R{\mathbb{R}}
\newcommand \Z{\mathbb{Z}}
\newcommand \C{\mathbb{C}}
\newcommand \ep{\varepsilon}
\newcommand \la{\langle}
\newcommand \ra{\rangle}
\newcommand \del{\partial}
\DeclareMathOperator \Ker{Ker}
\DeclareMathOperator \Int{Int}
\DeclareMathOperator \Aut{Aut}
\DeclareMathOperator \Mod{Mod}
\DeclareMathOperator \PMod{PMod}
\DeclareMathOperator \GL{GL}
\DeclareMathOperator \SL{SL}
\DeclareMathOperator \PSL{PSL}
\DeclareMathOperator \Sp{Sp}
\DeclareMathOperator \SU{SU}
\DeclareMathOperator \U{U}
\DeclareMathOperator \Hom{Hom}
\DeclareMathOperator \Stab{Stab}
\DeclareMathOperator \PConf{PConf}
\DeclareMathOperator \PB{PB}
\DeclareMathOperator \LMod{LMod}
\DeclareMathOperator \Homeo{Homeo}
\DeclareMathOperator \Irr{Irr}
\DeclareMathOperator \End{End}
\newcommand \G{\mathcal{G}}
\theoremstyle{definition}
\newtheorem{definition}{Definition}[section]
\theoremstyle{plain}
\newtheorem{theorem}[definition]{Theorem}
\newtheorem{lemma}[definition]{Lemma}
\newtheorem{question}[definition]{Question}
\theoremstyle{remark}
\numberwithin{equation}{section}
\title{Homological representations of low genus mapping class groups}
\author{Trent Lucas}
\begin{document}
\begin{abstract}
Given a finite group $G$ acting on a surface $S$, the centralizer of $G$ in the mapping class group $\Mod(S)$ has a natural representation given by its action on the homology $H_1(S;\Q)$.
We  consider the question of whether this representation has arithmetic image.  Several authors have given positive and negative answers to this question.  We give a complete answer when $S$ has genus at most 3.
\end{abstract}
\maketitle

\section{Introduction}
The mapping class group $\Mod(S)$ of a closed surface $S$ acts on the homology $H_1(S;\Q)$ and preserves the intersection form, yielding the classical \emph{symplectic representation} $\Mod(S) \rightarrow \Sp(H_1(S;\Q))$.  In this paper, we study an equivariant refinement of the symplectic representation.  Namely, given an action of a finite group $G$ on a surface $S$, the symplectic representation restricts to a map on centralizers $\Phi:\Mod(S)^G \rightarrow \Sp(H_1(S;\Q))^G$.  It is well known that the image of the symplectic representation is $\Sp(H_1(S;\Z)) \cong \Sp(2g,\Z)$.  In particular, the image is an \emph{arithmetic subgroup}, meaning it has finite index in the integer points of its Zariski closure.  In the equivariant case, we can ask whether the same is true: is the image of $\Phi$ an arithmetic subgroup?

This question can be refined as follows.  Observe that $H_1(S;\Q)$ splits into isotypic components $H_1(S;\Q)_V$ for each irreducible $\Q$-representation $V$ of $G$.  The group $\Sp(H_1(S;\Q))^G$ preserves this decomposition and hence splits as a product.  So, $\Phi$ induces representations
\begin{equation*}
\Phi_V:\Mod(S)^G \rightarrow \Aut_{\Q[G]}(H_1(S;\Q)_V, \hat{i}),
\end{equation*}
where $\hat{i}$ is the intersection form.  We now ask:

\begin{question}[The arithmeticity question]
For a given irreducible representation $V$, is the image of $\Phi_V$ an arithmetic subgroup?
\end{question}

If the action of $G$ is free and $S/G$ has sufficiently high genus, then for every $V$, the answer to the arithmeticity question is known to be ``yes" for abelian $G$ by Looijenga \cite{Loo1}, and ``yes" for many non-abelian cases by Grunewald, Larsen, Lubotzky, and Malestein \cite{GLLM}.  On the other hand, Deligne and Mostow \cite{DM} produced examples of non-free cyclic actions where the answer is ``no".  As a first step towards understanding precisely how often the image of $\Phi_V$ is arithmetic, we offer a complete answer for low genus cases.
\begin{theorem}\label{thm:main}
If $S$ has genus $g \leq 3$, then for every action of a finite group $G$ on $S$ and for every irreducible $\Q$-representation $V$ of $G$, the image of the map $\Phi_V:\Mod(S)^G \rightarrow \Aut_{\Q[G]}(H_1(S;\Q)_V, \hat{i})$ is an arithmetic subgroup.
\end{theorem}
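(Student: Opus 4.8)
\para{Proof strategy}
The plan is to reduce $\Phi_V$ to a monodromy representation of the mapping class group of the quotient orbifold, use Riemann--Hurwitz to cut down to finitely many cases, classify the target groups, and verify arithmeticity in each. The cover $S \to S/G$ presents the quotient as a closed $2$-orbifold $\mathcal O$ of genus $h$ with $n$ cone points of orders $e_1,\dots,e_n$ over branch points $b_1,\dots,b_n$. Standard Birman--Hilden theory identifies $\Mod(S)^G$ --- up to passing to finite-index subgroups and quotienting by a finite kernel --- with the orbifold mapping class group $\Mod(\mathcal O)$, which is commensurable with $\Mod(\Sigma)$ for $\Sigma$ the $n$-times-punctured genus-$h$ surface. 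Deleting the branch locus exhibits an unramified $G$-cover $S^\circ\to\Sigma$, classified by a surjection $\pi_1(\Sigma)\to G$; the isotypic component $(H_1(S;\Q)_V,\hat i)$ is identified with the parabolic part of $H_1(\Sigma;\mathcal L_V)$ for the corresponding local system $\mathcal L_V$, equipped with its natural pairing, and $\Phi_V$ becomes the induced monodromy action of $\Mod(\Sigma)$. Since arithmeticity of the image is preserved under passing to finite-index subgroups and killing finite kernels, it suffices to analyze this monodromy.

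Riemann--Hurwitz gives $2g-2=|G|\bigl(2h-2+\sum_{i=1}^n(1-1/e_i)\bigr)$, so $g\le 3$ forces $|G|\bigl(2h-2+\sum(1-1/e_i)\bigr)\le 4$. Apart from the elementary spherical and Euclidean quotients, the bracketed quantity is a positive rational bounded below in terms of $(h,n,e_i)$, so only finitely many tuples $(|G|,h,n,e_1,\dots,e_n)$ occur --- with $\mathcal O$ a sphere with three or four cone points, a sphere or torus with a few more cone points of small order, or one of a short list of covers with $h\le 1$, together with the nearly trivial unramified cases --- and for each, the admissible surjections $\pi_1(\Sigma)\to G$ are finite in number up to symmetry. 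I would organize this enumeration using the known classification of finite group actions in genus $\le 3$. To describe the target: by Schur's lemma $D:=\End_{\Q[G]}(V)$ is a division $\Q$-algebra carrying an involution induced by $\hat i$, making $H_1(S;\Q)_V$ a nondegenerate Hermitian $D$-module, so $\Aut_{\Q[G]}(H_1(S;\Q)_V,\hat i)$ is a classical group of symplectic, orthogonal, or unitary type of rank at most $\dim_\Q H_1(S;\Q)_V\le 6$. Concretely it is $\Sp(2k,\Q)$ with $k\le 3$; a unitary group over an imaginary quadratic field whose real points form a rank-one group of type $\SU(1,1)$ or $\SU(2,1)$; or --- when $D$ has larger center --- a group with compact real points, the Hermitian $D$-module being then forced to be $1$-dimensional. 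In particular the totally real subfield of the center of $D$ is $\Q$ in every noncompact unitary case.

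Now the case analysis. Whenever $\mathcal O$ is rigid --- a sphere with three cone points, the small $h\le 1$ exceptions, or more generally whenever $|G|$ is large enough that Riemann--Hurwitz forces such a quotient --- the group $\Mod(\mathcal O)$ is finite, so $\Phi_V$ has finite image and is trivially arithmetic; the same holds when the target has compact real points. When the target is symplectic and $\Mod(\mathcal O)$ is infinite, I would exhibit Dehn twists whose $\Phi_V$-images are symplectic transvections in a configuration --- along a chain, or along a symplectic basis --- known to generate a finite-index subgroup of $\Sp(2k,\Z)$, as in the surjectivity of the symplectic representation and A'Campo's braid-monodromy computation; the trivial isotypic component is essentially the symplectic representation of $\Mod(S/G)$, and the remaining symplectic-type components (higher analogues of Prym representations) are treated similarly. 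The genuinely delicate cases are the rank-one unitary targets coming from cyclic covers of $S^2$ with few branch points and from four-cone-point spheres, where $\Phi_V$ is a factor of the reduced Burau or hypergeometric monodromy at a root of unity: here, since the relevant totally real field is $\Q$, there is no second real place at which the Hermitian form could become definite, so the Deligne--Mostow obstruction to arithmeticity cannot occur, and it remains only to rule out thinness. For that I would again locate enough unipotent monodromy generators and invoke the arithmeticity results of Venkataramana and McMullen, Looijenga's work on Prym representations, and the arithmetic portion of Deligne--Mostow's classification.

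The main obstacle is precisely this last family: rank-one unitary targets are exactly where arithmeticity can fail in general, as the Deligne--Mostow non-arithmetic examples (which necessarily have genus $\ge 4$) demonstrate. The substance of the theorem is therefore that the bound $g\le 3$ confines us to the arithmetic side of that dichotomy --- roughly because a coefficient field large enough to support the Deligne--Mostow mechanism forces either a rigid quotient (hence finite monodromy) or a one-dimensional Hermitian module (hence compact target). Carrying this out is a finite but intricate piece of bookkeeping: for each action one must identify the monodromy group, pin down its integral structure, and produce a finite-index (typically unipotent-generated) subgroup.
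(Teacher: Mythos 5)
Your global framing matches the paper's: reduce to finitely many actions in genus $2$ and $3$ (the paper uses Broughton's classification rather than re-running Riemann--Hurwitz), dispose of the rigid quotients and compact targets as finite-image cases, handle the honestly symplectic components by explicit transvections, and isolate the rank-one targets as the real content. But your plan for those rank-one cases has a genuine gap. You propose to "locate enough unipotent monodromy generators and invoke the arithmeticity results of Venkataramana and McMullen, Looijenga's work on Prym representations, and the arithmetic portion of Deligne--Mostow." None of these applies to the twelve residual cases: the unipotent-generation criteria of Venkataramana, Grunewald--Larsen--Lubotzky--Malestein, and Looijenga all require the target to have real rank at least $2$ (or the quotient to have genus at least $2$ so that non-separating curves supply the unipotents), whereas here $\G_V(\R)$ has rank $1$ and $S/G$ has genus $0$ or $1$; Deligne--Mostow and McMullen cover only the cyclic covers of the sphere, and the hard cases are dihedral, $A_4$, $S_4$, etc. Your observation that the totally real field is $\Q$ rules out a definite Galois conjugate of the form, but that does not address thinness, which is exactly the failure mode you acknowledge and do not resolve. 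In rank one there is no general criterion, and "rule out thinness" is the entire difficulty.

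What the paper actually does at this point is different in kind: it identifies $\G_V\cong\SL_2(\Q)$ in all twelve cases (including the non-free ones, where the isotypic component is not a free module over its simple factor and the identification requires a separate argument), builds an explicit $G$-equivariant cell structure on $S$, defines a chain-level map computing the homological action of each \emph{partial rotation} (lift of a power of a Dehn twist), and then verifies by an exact algorithm --- Schreier's lemma to pass into $\Gamma(2)$, rewriting in the free generators, and Todd--Coxeter --- that the resulting matrices generate a finite-index subgroup of $\SL_2(\Z)$. The paper is explicit that there is no a priori reason these partial rotations should suffice; it is an experimental fact checked case by case. So the step your proposal treats as routine bookkeeping is the theorem's actual substance, and the tools you name for it would not carry it out.
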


In some cases, the image of $\Phi_V$ is finite and hence automatically arithmetic.  However, in many other cases we show that the image of $\Phi_V$ is in fact commensurable to the subgroup $\Aut_{\Z[G]}(H_1(S;\Z)_V, \hat{i})$.  Our main strategy to do so is to develop an algorithm for computing $\Phi_V(f)$ when $f \in \Mod(S)^G$ is a lift of any Dehn twist along the branched cover $S \rightarrow S/G$.

\para{Branched cover perspective}
We can apply Theorem \ref{thm:main} to obtain arithmetic representations of the mapping class group of the quotient.  That is, if we let $S^\circ$ denote the complement of the points with nontrivial $G$-stabilizers, then we can lift mapping classes to obtain a representation $\Gamma \rightarrow \Aut_{\Q[G]}(H_1(S;\Q)_V, \hat{i})$ on a finite index subgroup $\Gamma \subseteq \Mod(S^\circ/G)$.  The precise construction, following \cite{Loo1} and \cite{GLLM}, is detailed in Section 2.2. We note that if $S/G$ has genus $h \geq 3$, then Putman and Wieland \cite{PW} relate these representations to Ivanov's question of whether $\Mod(S^\circ/G)$ virtually surjects onto $\Z$.  However, $S/G$ has genus $h < 3$ in all cases of Theorem \ref{thm:main}.

\para{The target group}
The target group $\G_V := \Aut_{\Q[G]}(H_1(S;\Q)_V, \hat{i})$ can be identified as $\Aut_{A_V}(H_1(S;\Q)_V, \eta)$, where $A_V \subseteq \Q[G]$ is the corresponding simple factor of the Wedderburn decomposition and $\eta$ is a $\Q[G]$-valued skew-Hermitian form called the \emph{Reidemeister pairing} (see Section 2.2).  In all the interesting cases of Theorem \ref{thm:main}, we identify the group $\G_V$ explicitly as a symplectic group (with the important special case $\SL_2(\Q) = \Sp_2(\Q)$) or a unitary group.  More generally, as explained in \cite{GLLM}, the extension of scalars $\G_V(\C)$ will always be isomorphic to some $\mathrm{O}_k(\C)$, $\Sp_{2k}(\C)$, or $\GL_k(\C)$.

\para{Relation with prior work}
As mentioned above, the arithmeticity question has been answered in many cases.  Looijenga \cite{Loo1} proved that the answer is ``yes" when $G$ is abelian, $G$ acts freely, and $S/G$ has genus $h \geq 2$;  in fact, he found the images of the representations $\Phi_V$ explicitly.  For an arbitrary group $G$ acting freely, Grunewald, Larsen, Lubotzky, and Malestein \cite{GLLM} proved that if $G$ acts freely, and $S/G$ has genus $h \geq 3$, and the action extends to a ``redundant" action on a handlebody, then the answer to the arithmeticity question is ``yes."  More recently, Looijenga \cite{Loo2} gave an arithmeticity criterion for arbitrary actions where $S/G$ has genus $h \geq 2$.  There are also several results known for cyclic branched covers of the sphere.  For the hyperelliptic involution, A'Campo \cite{ACA} answered the question affirmatively and in fact found the image of $\Phi$ explicitly. Later, Deligne and Mostow \cite{DM} \cite{Mos} studied the monodromy of the family of curves
\begin{equation*}
y^m = (x-b_1)^{k_1} \cdots (x - b_n)^{k_n}
\end{equation*}
for distinct points $b_1, \ldots, b_n \in \C$ and $k_i,m \in \Z_{> 0}$.  As we explain in Section 3.2, the monodromy of this family coincides with the representations $\Phi_V$ for the cyclic group $C_m$.  They answer the arithmeticity question for certain values of $k_i$ and $m$ (``yes" in some cases, ``no" in others).  McMullen \cite{McM} answers the arithmeticity question affirmatively in some additional cases where $k_1 = \cdots = k_n$.  The monodromy of this family also coincides with the the Gassner representation of the pure braid group specialized at an $m$th root of unity (one obtains the Burau representation in the case $k_1 = \cdots = k_n = 1$), which was proved to have arithmetic image by Venkataramana \cite{Ven2, Ven1} when $n-1 \geq 2m$ and each $k_i$ is a unit mod $m$.

In this paper, we tackle several cases which are not covered by the above works, and where their methods do not apply.  The moduli space connections in \cite{DM} and \cite{McM} do not readily extend to non-cyclic $G$.  The proofs of arithmeticity in \cite{Ven2, Ven1}, \cite{GLLM}, and \cite{Loo2} use the fact that in a high rank Lie group, one can generate an arithmetic subgroup with ``enough" unipotent elements (c.f. \cite[Theorem~7]{Ven1}).  However, in almost every case we study, the group $\G_V(\R)$ has rank 1.  Moreover, the most natural way to find unipotent elements in the image of $\Phi_V$ is to lift Dehn twists around non-separating curves (e.g.\ as in \cite{Loo1} and \cite{Loo2}), but in most cases we consider $S/G$ has genus $h=0$, and hence there are no non-separating curves.

We also note that most of the low genus actions are not free. One additional challenge of working with non-free actions is that the character of $H_1(S;\Q)$ as a $G$-representation depends on the specific action, as opposed to the free case where it depends only on $G$.  Moreover, when $G$ acts freely or $G$ is abelian (i.e.\ in most of the works above), the isotypic component $H_1(S;\Q)_V$ is always a free module over the corresponding simple factor $A_V \subseteq \Q[G]$, but it need not be free in general (see Section 3.3), which further complicates the situation.

Several authors have studied aspects of homological representations beyond the arithmeticity question, including applications to 3-manifold topology; see e.g.\ \cite{Hadari_homological_eigenvalues}, \cite{Koberda_aymptotic_linearity}, \cite{Koberda_alexander_varieties}, \cite{Liu_homological_spectral_radii}, or \cite{Sun_homological_spectral_radius}.

\para{About the proof}
The arithmeticity of the image of $\Phi_V$ only depends on the action of $G$ up to conjugation in $\mathrm{Homeo}(S)$ and automorphisms of $G$.  Up to this equivalence, there are only finitely many actions of finite groups on surfaces of genus $g \leq 3$.  The arithmeticity question is trivial in the cases $g=0$ and $g=1$, and so it suffices to examine the finitely-many actions on genus 2 and 3 surfaces.  In \cite{Bro}, Broughton constructs a complete list of such actions.  We summarize the 35 cases where the arithmeticity question is nontrivial in Table \ref{table:main}.

Twenty of the cases are immediate due to having a finite image, factoring through smaller genus cases, or following from prior work on cyclic actions.  In 3 cases, we show by hand that $\Im(\Phi_V)$ is arithmetic by understanding the action geometrically. In the 12 remaining cases, the image of $\Phi_V$ is (conjugate to) a subgroup of $\SL_2(\Z)$.  Given a set of elements in $\SL_2(\Z)$, there is an algorithm to determine whether they generate a finite index subgroup.  Thus, as long as we can compute $\Phi_V(f)$ for a set of $f \in \Mod(S)^G$, it is easy to check if these matrices are enough to generate an arithmetic subgroup.  We can find elements of $\Mod(S)^G$ by lifting elements of $\Mod(S^\circ/G)$, but in general it is not easy to compute the action of such a lift on $H_1(S;\Q)$.  There is one class of elements in $\Mod(S)^G$ which admit simple geometric descriptions, namely lifts of powers of Dehn twists, which we call \emph{partial rotations}.  Surprisingly, in all 12 cases, we find that partial rotations are in fact enough to generate an arithmetic group. 

To show that one can generate an arithmetic subgroup using partial rotations, we adapt the geometric description into an algorithm to compute their action on homology.  To build this algorithm, we equip $S$ with an explicit $G$-compatible cell structure; the partial rotation is not quite a cellular map, but we can directly define a map on the cellular chain group whose induced map on $H_1(S;\Q)$ agrees with the action of the partial rotation.    We carry out the computations by implementing the algorithm in SageMath.  The code is available in a repository at \cite{low-genus-actions}.

\para{Further questions}
As suggested above, in the final 12 cases, there is no reason to expect a priori that partial rotations generate a finite index subgroup of $\G_V(\Z)$.  A natural question is whether $\Mod(S)^G$ is generated by partial rotations in these cases, or equivalently, whether the liftable subgroup of $\Mod(S^\circ/G)$ is generated by liftable Dehn twists.  More generally, it would be of interest to find generating sets of $\Mod(S)^G$ whose generators admit simple geometric descriptions.

Additionally, the key property that makes our final 12 cases tractable is that these representations land in $\SL_2(\Q)$.  One might ask if there are any actions where $\G_V \cong \SL_2(\Q)$ but $\Im(\Phi_V)$ is not commensurable to $\SL_2(\Z)$.  Note that there are infinitely many actions where $\G_V \cong \SL_2(\Q)$ for a faithful irreducible representation $V$.  For example, if we let $F_3= \la x_1, x_2, x_3 \ra$ be the free group on 3 letters (i.e.\ the fundamental group of the four times puncture sphere), then the surjection $F_3 \rightarrow S_n$ given by $x_1 \mapsto (12)$, $x_2 \mapsto (12)$, and $x_3 \mapsto (12 \cdots n)$ gives an action of $G = S_n$ on a surface $S$ such that $S/G$ is a sphere with 4 branch points.  For $n \geq 5$, every irreducible representation of $G$ is faithful except for the trivial and sign representations, and each simple factor of $\Q[G]$ is isomorphic to some $M_k(\Q)$.  Thus the character formula in Theorem \ref{thm:H1asrep} tells us that $H_1(S;\Q)$ contains two copies of some faithful irreducible representation $V$.  By the arguments in Section 3.3, $\G_V \cong \SL_2(\Q)$.  We note that $S$ has large genus in these examples, and the current implementation of our algorithm is not fast enough to handle such cases.

\para{Outline}
The paper is organized as follows.  In Section 2, we establish the necessary background for the proof of Theorem \ref{thm:main}.  Namely, we recall the structure of $H_1(S;\Q)$ as a $\Q[G]$-module, define the Reidemeister pairing $\eta$, and carefully define the target group $\G_V$.  In Section 3, we begin the proof of Theorem \ref{thm:main}.  We first summarize the cases and explain which ones follow from simple reductions or prior work.  For the remaining cases, we identify the target group $\G_V$ explicitly, and tackle some cases ``by hand".  In Section 4, we finish the proof of Theorem \ref{thm:main} by studying the cases where $\Im(\Phi_V) \subseteq \SL_2(\Z)$ (up to conjugation and commensurability).  We define partial rotations, develop an algorithm to compute their action on homology, and explain how to determine if the resulting subgroup of $\SL_2(\Z)$ has finite index.  We record the results of our computations in Table \ref{table:results}.

\para{Acknowledgements}
We thank Bena Tshishiku for introducing us to this problem and for countless helpful conversations throughout this work.  We also thank Ethan Dlugie,  Justin Malestein, and Nick Salter for helpful conversations on the topics surrounding this work, and we thank Benson Farb and Dan Margalit for helpful comments on an earlier version of this paper.

%-------------------------------------------------------------------------------------------------------------------------------------------------------------------

\section{Equivariant mapping classes and homology}

In this section, we recall some facts about the representation $\Mod(S)^G \rightarrow \Sp(H_1(S))^G$ to establish the necessary background for the proof of Theorem \ref{thm:main}.  In particular, the results in this section are all known by prior work, namely \cite{GLLM}, \cite{KS}, and the references therein.  In Section 2.1, we recall the structure of $H_1(S;\Q)$ as a $G$-representation, and in Section 2.2, we examine the target algebraic group $\G_V$.

\subsection{Homology as a representation}

Fix a finite group $G$, a closed surface $S$, and an action of $G$ on $S$.  Let $S^\circ$ denote the surface obtained by removing the points of $S$ with nontrivial $G$-stabilizers.  Then the quotient $S^\circ \rightarrow S^\circ/G$ is a covering map, and it extends to a branched cover $S \rightarrow S/G$.  The cover (and hence the action) is completely described by the monodromy homomorphism $\varphi:\pi_1(S^\circ/G) \rightarrow G$.  Suppose $S/G$ has genus $h$ and $n$ branch points $x_1, \ldots, x_n$.  Then we fix a presentation
\begin{equation*}
\pi_1(S^\circ/G) = \la c_1, d_1, \ldots, c_h, d_h, x_1, \ldots, x_n \mid [c_1,d_1] \cdots [c_h,d_h]x_1 \cdots x_n = 1\ra.
\end{equation*}
The monodromy homomorphism $\varphi$ is thus given by a $(2h+n)$-tuple of elements of $G$ which satisfies the relation of this presentation.

\para{Character formula} 
First, we present a formula for $H_1(S;\Q)$ as a $G$-representation.  Let $\Q[G]$ denote the rational group ring of $G$.  For every subgroup $H$ of $G$, the action of $G$ on the set of cosets $G/H$ gives rise to a permutation representation of $G$, which we denote by $\Q[G/H]$.

\begin{theorem}[Chevalley-Weil, Gasch\"utz, Koberda-Silberstein]\label{thm:H1asrep}
As a $\Q[G]$-module,
\begin{equation*}
H_1(S;\Q) \cong \frac{\Q^2 \oplus \Q[G]^{2h+n-2}}{\bigoplus_{i=1}^n \Q[G/\la \varphi(x_i) \ra]}
\end{equation*}
where $\Q$ denotes the trivial $\Q[G]$-module.
\end{theorem}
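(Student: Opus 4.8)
The plan is to compute the character of $H_1(S;\Q)$ as a virtual $G$-representation by relating it to the Euler characteristic of the cellular chain complex of a $G$-equivariant CW structure on $S$, and then upgrade the virtual identity to an honest isomorphism of $\Q[G]$-modules using semisimplicity of $\Q[G]$. First I would choose a $G$-invariant CW structure on $S$ obtained by pulling back a CW structure on $S/G$ in which the branch points $x_1, \dots, x_n$ are $0$-cells and the generating loops $c_j, d_j, x_i$ are represented by $1$-cells; the standard model is a wedge-like decomposition realizing the given presentation of $\pi_1(S^\circ/G)$, with one $2$-cell attached along the defining relator. Over such a structure, $G$ acts freely on the cells lying over the open surface $S^\circ$, so the corresponding chain groups are free $\Q[G]$-modules, while the cells lying over a branch point $x_i$ have stabilizer $\langle \varphi(x_i)\rangle$ and contribute a copy of $\Q[G/\langle\varphi(x_i)\rangle]$.

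The key steps, in order: (1) Write down the $G$-CW chain complex $0 \to C_2 \to C_1 \to C_0 \to 0$ with $C_2 \cong \Q[G]$ (one relator $2$-cell, on which $G$ acts freely), $C_1 \cong \Q[G]^{2h+n}$ (the lifts of the $2h+n$ generating edges), and $C_0 \cong \Q[G] \oplus \bigoplus_{i=1}^n \Q[G/\langle\varphi(x_i)\rangle]$ (one free orbit of vertices coming from the basepoint region of $S/G$, together with the orbits over each branch point). (2) Take alternating sums in the representation ring $R_\Q(G)$: since $H_0(S;\Q) \cong \Q$ and $H_2(S;\Q) \cong \Q$ are both trivial modules, we get $[H_1(S;\Q)] = [C_1] - [C_0] - [C_2] + [H_0] + [H_2] = \Q[G]^{2h+n} - \Q[G] - \bigoplus_i \Q[G/\langle\varphi(x_i)\rangle] - \Q[G] + \Q + \Q$, which rearranges to $\Q^2 + \Q[G]^{2h+n-2} - \bigoplus_i \Q[G/\langle\varphi(x_i)\rangle]$ in $R_\Q(G)$. (3) Promote this virtual identity to a genuine $\Q[G]$-module isomorphism: because $\Q[G]$ is semisimple (Maschke), two $\Q[G]$-modules with the same class in $R_\Q(G)$ are isomorphic, provided the formal quotient is actually realized by a submodule. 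For the quotient to make sense one checks that $\bigoplus_i \Q[G/\langle\varphi(x_i)\rangle]$ embeds in $\Q^2 \oplus \Q[G]^{2h+n-2}$; this follows from multiplicity-counting on each isotypic component once one knows $h \geq 1$ or that the exceptional cases are handled, or alternatively by appealing directly to the known Chevalley-Weil theorem, which is what the cited references \cite{GLLM, KS} provide.

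The main obstacle is step (3): the passage from a virtual (Grothendieck-group) identity to an actual isomorphism, and in particular making sense of the displayed quotient when $2h+n-2$ is small (e.g. sphere quotients with few branch points) so that $\Q[G]^{2h+n-2}$ might not literally contain all the summands $\Q[G/\langle\varphi(x_i)\rangle]$ as an honest submodule. In the genuinely low-genus regime relevant to this paper one must verify that the relevant multiplicities are nonnegative — equivalently that $\dim H_1(S;\Q) = 2g \geq 0$ forces the inequalities $\langle H_1(S;\Q), V\rangle \geq 0$ for each irreducible $V$, which is automatic — and that the formula is interpreted as ``the unique $\Q[G]$-module whose isotypic multiplicities are the indicated differences.'' Since Theorem \ref{thm:H1asrep} is attributed to prior work, I would present the chain-complex computation as the proof of the character identity and then cite Chevalley-Weil / Gasch\"utz / Koberda-Silberstein for the precise module-level statement, rather than re-deriving the submodule structure from scratch.
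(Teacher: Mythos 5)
Your computation is correct and arrives at the right virtual character, but it takes a different route from the paper. The paper does not run an equivariant Euler characteristic argument on the closed surface; instead it factors the statement through the punctured surface, using the surjection $H_1(S^\circ;\Q) \to H_1(S;\Q)$ together with two cited facts: Gasch\"utz's theorem that $H_1(S^\circ;\Q) \cong \Q \oplus \Q[G]^{2h+n-2}$ (the action on $S^\circ$ being free), and the Koberda--Silberstein identification of the kernel as $\bigl(\oplus_i \Q[G/\la\varphi(x_i)\ra]\bigr)/\Q$, spanned by small loops around the punctures with the single relation that their sum vanishes. Your approach instead produces the summands $\Q[G/\la\varphi(x_i)\ra]$ as orbits of $0$-cells over the branch points rather than as loops around punctures; it is more self-contained (the paper's route outsources the two facts to prior work) at the cost of having to set up a genuine $G$-CW structure with non-free cells, whereas the paper's punctured-surface perspective is reused elsewhere (e.g.\ in Lemma \ref{lem:reduction_by_transfer}). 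Two small points on your version: (i) as literally described your CW structure is inconsistent --- you cannot simultaneously take the branch points as $0$-cells and the loops $x_i$ (which are based away from the branch points) as $1$-cells, since the branch points would then sit in the interior of the $2$-cell and its preimage in $S$ would be branched; the standard fix is to take arcs from the basepoint to the branch points as $1$-cells in place of the loops $x_i$ (or the polygon model of Section 4.1 of the paper), which yields exactly the chain groups you wrote down, so the arithmetic is unaffected; (ii) your worry in step (3) resolves itself: since the virtual class of the difference is realized by the honest module $H_1(S;\Q)$, every isotypic multiplicity of $\oplus_i \Q[G/\la\varphi(x_i)\ra]$ is bounded by that of $\Q^2 \oplus \Q[G]^{2h+n-2}$, so the displayed quotient is well-defined up to isomorphism by semisimplicity, with no case analysis on $h$ or $n$ needed.
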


Note that $n=0$ if and only if $G$ acts freely on $S$, i.e.\ $S \rightarrow S/G$ is an unbranched cover.  In this case, Theorem \ref{thm:H1asrep} reduces to the classic formula of Chevalley-Weil \cite{CW}; see also \cite{GLLM} for a topological proof.  If $n > 0$, then the isomorphism in Theorem \ref{thm:H1asrep} is realized by the natural surjection $H_1(S^\circ;\Q) \rightarrow H_1(S; \Q)$; in particular, the result follows from two facts:
\begin{enumerate}
\item As $\Q[G]$-modules, $H_1(S^\circ;\Q) \cong \Q \oplus \Q[G]^{2h+n-2}$.
\item As $\Q[G]$-modules, $\Ker(H_1(S^\circ;\Q) \rightarrow H_1(S;\Q)) \cong \left(\oplus_{i=1}^n \Q[G/\la \varphi(x_i) \ra]\right)/\Q$.
\end{enumerate}
The first fact is due to Gasch\"utz \cite{Gas}; again see \cite{GLLM} for a topological proof.  The second fact is due to Koberda-Silberstein \cite{KS}; the idea is as follows.  Observe that $\Ker(H_1(S^\circ;\Q) \rightarrow H_1(S;\Q))$ is spanned by one small loop around each puncture, with the single relation that the sum of all these loops is trivial.  The action of $G$ on these loops is precisely the permutation action of $G$ on the punctures.  The formula then follows from the fact that the action of $G$ on the fiber over the branch point $x_i$ is the permutation representation $\Q[G/\la \varphi(x_i) \ra]$, and $G$ acts trivially on sum of the punctures.

\para{Isotypic components}
We can decompose $H_1(S;\Q)$ into its \emph{isotypic components}.  Namely, let $\Irr(G)$ denote the set of irreducible $\Q$-representations of $G$, and for every $V \in \Irr(G)$, let $m_V$ denote the multiplicity of $V$ in $H_1(S;\Q)$.  Then
\begin{equation*}
H_1(S;\Q) = \bigoplus_{V \in \Irr(G)} H_1(S;\Q)_V,
\end{equation*}
where $H_1(S;\Q)_V \cong V^{m_V}$.  Every $\Q[G]$-linear automorphism of $H_1(S;\Q)$ preserves this decomposition.

Moreover, recall the \emph{Wedderburn decomposition}
\begin{equation*}
\Q[G] \cong \prod_{V \in \Irr(G)} M_{n_V}(D_V),
\end{equation*}
where $D_V$ is the division algebra $\End_{\Q[G]}(V)$ and $n_V = \dim_{D_V}(V)$.  For each $V \in \Irr(G)$, the action of $\Q[G]$ on $V$ factors through $M_{n_V}(D_V)$.  We call $A_V := M_{n_V}(D_V)$ the \emph{simple factor} of $\Q[G]$ corresponding to $V$.  Each simple factor has a corresponding \emph{idempotent} $e_V  \in A_V$; namely $e_V$ is the identity matrix $I_{n_V}$. Note that left-multiplication by $e_V$ gives the projections $\Q[G] \rightarrow A_V$ and $H_1(S;\Q) \rightarrow H_1(S;\Q)_V$.

We note that if $G$ acts freely, then by Theorem \ref{thm:H1asrep} the isotypic component $H_1(S;\Q)_V$ is a free $A_V$-module.  However, this module need not be free if $G$ does not act freely, as we see in Section 3.2.

\para{The Reidemeister pairing}
Let $\hat{i}:H_1(S;\Q) \times H_1(S;\Q) \rightarrow \Q$ denote the intersection form; this is a non-degenerate alternating bilinear form.  As is well known (see e.g.\ \cite{Loo1} or \cite{GLLM}), one can upgrade $\hat{i}$ to a non-degenerate skew-Hermitian form via the action of $G$.  Namely, we define the \emph{Reidemeister pairing} $\eta: H_1(S;\Q) \times H_1(S;\Q) \rightarrow \Q[G]$ by
\begin{equation*}
\eta(x,y) = \sum_{g \in G} \hat{i}(x, gy)g.
\end{equation*}
The non-degeneracy of $\eta$ follows directly from that of $\hat{i}$.  The group ring $\Q[G]$ has the \emph{inversion involution} $\tau$, defined on elements of $G$ by $\tau(u) = u^{-1}$ and extended linearly.  With respect to $\tau$, the Reidemeister pairing is a skew-Hermitian form, meaning that
\begin{itemize}
\item $\eta$ is $\Q$-bilinear,
\item $\eta(vx, wy) = v\eta(x,y)\tau(w)$ for all $x,y \in H_1(S;\Q)$ and $v,w \in \Q[G]$,
\item $\eta(y,x) = -\overline{\eta(x,y)}$ for all $x,y \in H_1(S;\Q)$.
\end{itemize}

Note that $\tau$ preserves each simple factor $A_V \subseteq \Q[G]$ \cite[Lemma~3.2]{GLLM}.  Moreover, the Reidemeister pairing restricts to a non-degenerate $A_V$-valued skew-Hermitian form on $H_1(S;\Q)_V$.  It is $A_V$-valued because for every $x,y \in H_1(S;\Q)_V$,
\begin{equation*}
\eta(x,y) = \eta(e_Vx, e_Vy) = e_V\eta(x,y)\tau(e_V) \in A_V,
\end{equation*}
and it is nondegenerate because an analogous argument shows that the isotypic components are orthogonal with respect to $\eta$.

\subsection{The action of mapping classes}

As mentioned above, the symplectic representation $\Mod(S) \rightarrow \Sp(H_1(S;\Q))$ restricts to a representation $\Phi:\Mod(S)^G \rightarrow \Sp(H_1(S;\Q))^G$, where the superscript denotes the centralizer.  Note that an automorphism of $H_1(S;\Q)$ that commutes with $G$ will preserve $\hat{i}$ if and only if it preserves the Reidemeister pairing, i.e.\ 
\begin{equation*}
\Sp(H_1(S;\Q))^G = \Aut_{\Q[G]}(H_1(S;\Q), \eta).
\end{equation*}
We let $\G = \Aut_{\Q[G]}(H_1(S;\Q),\eta)$.  Now $\G$ is a linear algebraic group defined over $\Q$.  Since $\Mod(S)$ preserves the integer lattice $H_1(S;\Z) \subseteq H_1(S;\Q)$,
\begin{equation*}
\Im(\Phi) \subseteq \Aut_{\Z[G]}(H_1(S;\Z),\eta) =: \G(\Z).
\end{equation*}
The group $\G(\Z)$ is precisely the integer points of the algebraic group $\G$.

Since every $\Q[G]$-automorphism preserves the isotypic components,
\begin{equation*}
\G = \prod_{V \in \Irr(G)} \Aut_{A_V}(H_1(S;\Q)_V, \eta).
\end{equation*}
We let $\G_V = \Aut_{A_V}(H_1(S;\Q)_V,\eta)$.  This is an algebraic group defined over $K$, where $K$ is the subfield of the center of $D_V$ that is fixed by $\tau$.  In most cases we study, $K=\Q$.  Post-composing $\Phi$ with the projections $\pi_V:\G \rightarrow \G_V$, we obtain maps $\Phi_V:\Mod(S)^G \rightarrow \G_V$.  Now, let $\Lambda_V = A_V \cap \Z[G]$ and let
\begin{equation*}
H_1(S;\Z)_V = H_1(S;\Q)_V \cap H_1(S;\Z).
\end{equation*}
The image of $\G(\Z)$ under the projection $\pi_V$ is given by
\begin{equation*}
\pi_V(\G(\Z)) =  \Aut_{\Lambda_V}(H_1(S;\Z)_V, \eta) =: \G_V(\mathcal{O}_K),
\end{equation*}
where $\mathcal{O}_K$ is the ring of integers of $K$. In particular, $\G_V(\mathcal{O}_K)$ is the integer points of the $K$-algebraic group $\G_V$, and since $\Mod(S)$ preserves $H_1(S;\Z)_V$, $\Im(\Phi_V) \subseteq \G_V(\mathcal{O}_K)$.

A subgroup of an algebraic group is \emph{arithmetic} if it has finite index in the integer points of its Zariski closure (in particular, finite subgroups are always arithmetic).  
It need not be that $\Im(\Phi_V)$ is Zariski dense in $\G_V$ (e.g.\ if $\Mod(S)^G$ is finite and $\G_V(\mathcal{O}_K)$ is not), but it will be in every interesting case of Theorem \ref{thm:main}.  Namely, we will show the following, from which Theorem \ref{thm:main} follows.

\begin{theorem}
For every action of a finite group $G$ on a surface $S$ of genus $g \leq 3$ and every $V \in \Irr(G)$, the image of the representation $\Phi_V:\Mod(S)^G \rightarrow \G_V(\mathcal{O}_K)$ is either finite or finite index.
\end{theorem}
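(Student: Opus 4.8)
The plan is to reduce the theorem to a finite check, then carry out that check.  By the opening remarks, arithmeticity of $\Im(\Phi_V)$ depends only on the $G$-action up to conjugation in $\Homeo(S)$ and automorphisms of $G$, and there are only finitely many such actions in genus $g \le 3$; in genus $0$ and $1$ the statement is trivial (the target groups are trivial or $\SL_2$ with finite equivariant image, or the classical case), so it suffices to run through Broughton's list of actions on genus $2$ and $3$ surfaces.  I would organize these into three tiers.  \emph{Tier one:} cases where $\Im(\Phi_V)$ is visibly finite, because $\Mod(S)^G$ is finite, or $\G_V(\mathcal{O}_K)$ is itself finite (e.g.\ $\G_V$ compact, or $H_1(S;\Q)_V = 0$ so $m_V = 0$, which one reads off from the character formula in Theorem \ref{thm:H1asrep}), or because the action factors through a lower-genus action already handled, or is a cyclic branched cover covered by \cite{ACA}, \cite{DM}, \cite{McM}.  \emph{Tier two:} a handful of cases treated ``by hand'' by exhibiting enough explicit mapping classes — for instance lifts of Dehn twists around non-separating curves when $S/G$ has positive genus, whose homological action is unipotent — and invoking the relevant density/arithmeticity input.  \emph{Tier three:} the remaining cases, which (after identifying $\G_V$ explicitly as in Section 2.2) have $\G_V \cong \SL_2(\Q) = \Sp_2(\Q)$, so $\G_V(\mathcal O_K) = \G_V(\Z)$ is commensurable to $\SL_2(\Z)$.

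For tier three the strategy is computational.  Fix a $G$-equivariant CW structure on $S$ compatible with the branched cover $S \to S/G$, so that the cellular chain complex is a complex of $\Z[G]$-modules and $H_1(S;\Z)$, the intersection form $\hat i$, and hence $\eta$ and the idempotent projections onto $H_1(S;\Q)_V$, are all computable.  The maps I would feed into $\Phi_V$ are the \emph{partial rotations}: lifts to $\Mod(S)^G$ of powers of Dehn twists along simple closed curves in $S^\circ/G$.  The geometric point is that although a partial rotation is not cellular for the chosen structure, one can write down an explicit map on $C_1$ (supported near the preimage annulus of the curve being twisted) whose induced map on $H_1$ equals $(\Phi_V(f))$; this is the content of the algorithm, which I would implement in SageMath.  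Having computed a finite generating-type set of matrices $\Phi_V(f_j) \in \SL_2(\Z)$ (up to the conjugation identifying $\G_V(\Z)$ with $\SL_2(\Z)$), I would apply the standard algorithm (reduction theory / the Farey symbol or a coset-enumeration method à la Verrill–Kulkarni) to decide whether $\la \Phi_V(f_j)\ra$ has finite index in $\SL_2(\Z)$.  In every tier-three case the output is ``finite index,'' completing the proof.

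The main obstacle is tier three, and within it the construction and verification of the chain-level formula for a partial rotation.  The subtlety is that the partial rotation is only defined as a lift — a priori only up to the deck group — so one must pin down a specific homeomorphism, check it is genuinely $G$-equivariant, and then prove that the hands-on chain map really does induce its action on homology rather than merely something homologous on the nose but off by a boundary in a way that matters for $\eta$.  One must also confirm that in each case the partial rotations actually produce \emph{enough} matrices: there is no a priori reason the liftable Dehn twists generate a finite-index subgroup, so this is genuinely discovered case by case, and a single case where they do not would require supplementing with another explicit construction.  The remaining work — bookkeeping Broughton's tables, matching the Deligne–Mostow and cyclic-cover cases to our $\Phi_V$, and the routine linear algebra identifying each $\G_V$ — is, by contrast, mechanical.
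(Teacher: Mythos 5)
Your proposal follows essentially the same route as the paper: reduce to Broughton's finite list, dispose of the finite-image, lower-genus, and prior-work cases, handle a few cases by explicit equivariant multitwists, and settle the remaining $\G_V \cong \SL_2(\Q)$ cases by computing partial rotations via a non-cellular chain-level map and running a finite-index test in $\SL_2(\Z)$. The only cosmetic difference is the choice of finite-index algorithm (the paper uses Schreier's lemma, the Chorna--Geller--Shpilrain size reduction, and Todd--Coxeter rather than Farey symbols), and you correctly identify the genuine risks (well-definedness of the lift, validity of the chain map, and the a posteriori fact that partial rotations suffice in all twelve cases).
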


\para{Representation of the quotient}
Theorem \ref{thm:main} yields arithmetic representations of $\Mod(S^\circ/G)$ as follows.

Fix a base point $*$ on $S^\circ/G$ and a base point $\widetilde{*}$ on $S$ in the fiber over $*$.  Let $\Mod(S^\circ/G, *)$ denote the based mapping class group of $S^\circ/G$, i.e.\ the group of homeomorphisms that perserve the set of branch points and fix $*$, up to isotopy rel the branch points and $*$.  Note that $\Mod(S^\circ/G, *)$ acts on $\pi_1(S^\circ/G)$ and hence on the finite set $\Hom(\pi_1(S^\circ/G), G)$.  We define the finite-index subgroup
\begin{equation*}
\LMod(S^\circ/G, *) := \Stab(\varphi),
\end{equation*}
where $\varphi:\pi_1(S^\circ/G) \rightarrow G$ is the monodromy homomorphism.

For every $[f] \in \LMod(S^\circ/G, *)$, the homeomorphism $f$ has a unique lift $\widetilde{f}:S^\circ \rightarrow S^\circ$ which fixes $\widetilde{*}$ and commutes with $G$.  Thus, we get a well-defined lifting map $\LMod(S^\circ/G,*) \rightarrow \Mod(S^\circ, \widetilde{*})^G$ and hence a representation
\begin{equation*}
\Psi: \LMod(S^\circ/G, *) \rightarrow \Sp(H_1(S;\Q))^G.
\end{equation*}
The image of $\LMod(S^\circ/G,*) \rightarrow \Mod(S^\circ,\widetilde{*})^G$ is finite-index, and the map $\Mod(S^\circ,\widetilde{*})^G \rightarrow \Sp(H_1(S;\Q))^G$ factors through $\Mod(S)^G$, so $\Im(\Psi)$ is commensurable to $\Im(\Phi)$.  

By \cite[Proposition~8.1]{GLLM}, the map $\Psi$ virtually factors through $\Mod(S^\circ/G)$; the idea is as follows.    Point pushing elements of $\LMod(S^\circ/G,*)$ act on $\Ker(\varphi) = \pi_1(S^\circ)$ by inner automorphisms of $\pi_1(S^\circ/G)$.  Since $\Ker(\varphi)$ is finite-index and $H_1(S^\circ)$ is the abelianization of $\Ker(\varphi)$, point-pushing elements of $\LMod(S^\circ/G,*)$ map to finite order elements under $\Psi$.  Thus, if we take $\Gamma \subseteq \LMod(S^\circ/G,*)$ to be the preimage of a torsion-free finite index subgroup of $\Sp(H_1(S))^G$, then on $\Gamma$ the map $\Psi$ factors though a finite index subgroup of $\Mod(S^\circ/G)$.  One can then take the induced representation to get an arithmetic representation of the full group $\Mod(S^\circ/G)$.

%-------------------------------------------------------------------------------------------------------------------------------------------------------------------

\section{Dividing up the cases}

We now begin the proof of Theorem \ref{thm:main}.  Fix an action of a finite group $G$ on a surface $S$ of genus $g \leq 3$; our goal is to show that the representations $\Phi_V:\Mod(S)^G \rightarrow \G_V$ defined in Section 2 have an arithmetic image.  Theorem \ref{thm:main} only requires work for genus $g=2$ and $g=3$, since $\Mod(S^2)$ is trivial and the symplectic representation $\Mod(T^2) \rightarrow \Sp(H_1(T^2;\Z))$ is an isomorphism.  Moreover, the arithmeticity of an action of $G$ on $S$ depends only on the equivalence class of the action, where two actions $\rho_1,\rho_2:G \rightarrow \Homeo(S)$ are equivalent if there exist $\psi \in \Aut(G)$ and $f \in \Homeo(S)$ such that
\begin{equation*}
\begin{tikzcd}
G \arrow[r, "\rho_1"] \arrow[d, "\psi"] & \Homeo(S) \arrow[d, "g \mapsto fgf^{-1}"] \\
G \arrow[r, "\rho_2"] \arrow[r] & \Homeo(S)
\end{tikzcd}
\end{equation*}
commutes. Thus, it is enough to verify Theorem \ref{thm:main} for a list of representatives of each equivalence class of actions on genus 2 and 3 surfaces; in \cite{Bro}, Broughton provides precisely such a list.  For each case, we show that it is arithmetic in one of five ways:
\begin{enumerate}
\item \emph{Small quotient:} If $S/G$ has genus 0 and 3 or fewer branch points, then $\Mod(S)^G$ is finite and hence the action is automatically arithmetic.
\item \emph{No faithful irreps}: If $H_1(S;\Q)$ does not contain any faithful irreducible subrepresentations of $G$, then the arithmeticity reduces to lower genus cases.
\item \emph{Prior work}: These cases follow directly from another work, namely \cite{ACA}, \cite{DM}, \cite{Mos}, \cite{Loo1}, or \cite{McM}.
\item \emph{Lifting twists}: If the group $\G_V$ is isomorphic to $\SL_2(\Q)$, then we prove arithmeticity by lifting twists (see Section 4).
\item \emph{Direct proof}: If a case does not fall into the above categories, then we provide an individual proof of arithmeticity.
\end{enumerate}

We summarize the cases in Table \ref{table:main}.  About half of the cases on Broughton's list are small quotient cases, which we omit from the table.  The first column is the label from Broughton's list; labels starting with $2$ are genus 2 actions, and labels starting with 3 are genus 3 actions.  For each case, the \emph{branching data} is a tuple $(h; m_1, \ldots, m_n)$ where $h$ is the genus of $S/G$ and the $m_i$ are the orders of the branch points $x_i$; the genus $h$ is omitted when $h=0$, and exponents are used to denote repeated branching orders (e.g.\ $(2^3, 4^2)$ denotes $(0;2,2,2,4,4)$).  The monodromy homomorphism is described by a tuple of elements of $G$, as explained at the start of Section 2 (Broughton calls this a ``generating vector").  Our notation is largely standard, but in particular:
\begin{itemize}
\item $C_m$ denotes the cyclic group of order $m$, and we fix a generator $c$.  For a product of cyclic groups we let $c'$ and $c''$ denote a generator of the second and third factor respectively.
\item $D_m$ denotes the dihedral group of order $2m$, with the presentation
\begin{equation*}
D_m = \la r,s \mid r^m = s^2 = 1, srs^{-1} = r^{-1}\ra.
\end{equation*}
\item $C_4 \circ D_4$ denotes the \emph{central product} of $C_4$ and $D_4$, with the presentation
\begin{equation*}
C_4 \circ D_4 = \la c, r, s \mid c^4 = r^4 = s^2 = [c,r] = [c,s] = 1, c^2 = r^2, srs^{-1} = r^{-1} \ra.
\end{equation*}
\end{itemize}
We note that Broughton's list contains two typos: the monodromy in 3.r.1 should be $(x,y,z,yz,x)$, and the second action in 3.ad.2 does not have a valid monodromy (the correct entry $(x,xzy,z^2,yz)$ is not a surjective homomorphism and corresponds to the case 2.n).

\begin{table}[h!]
\center
\begin{tabular}{|c|c|c|c|c|}
\hline
Case & $G$ & Branching data & Monodromy &  Case type \\
\hline
\rowcolor{lightgray}
2.a & $C_2$ & $(2^6)$  & $(c,c,c,c,c,c)$ & Prior work \\
\hline
2.b & $C_2$ & $(1;2^2)$  & $(1,1,c,c)$ & Direct proof \\
\hline
\rowcolor{lightgray}
2.c & $C_3$ & $(3^4)$  & $(c,c,c^{-1},c^{-1})$ & Prior work \\
\hline
\rowcolor{lightgray}
2.e & $C_4$ & $(2^2,4^2)$  & $(c^2,c^2,c,c^{-1})$ & Prior work \\
\hline
\rowcolor{lightgray}
2.f & $C_2 \times C_2$ & $(2^5)$  & $(c,c,c,c',cc')$ & No faithful irreps \\
\hline
\rowcolor{lightgray}
2.k.1 & $C_6$ & $(2^2,3^2)$  & $(c^3,c^3,c^2,c^4)$ & Prior work \\
\hline
2.k.2 & $D_3$ & $(2^2,3^2)$  & $(s,s,r,r^{-1})$ & Lifting twists \\
\hline
2.n & $D_4$ & $(2^3,4)$  & $(s,sr,r^2,r)$ & Lifting twists \\
\hline
2.s & $D_6$ & $(2^3,3)$  & $(s,sr,r^3,r^2)$ & Lifting twists \\
\hline
\rowcolor{lightgray}
3.a & $C_2$ & $(2^8)$  & $(c,c,c,c,c,c,c,c)$ & Prior work \\
\hline
3.b & $C_2$ & $(1;2^4)$ & $(1,1,c,c,c,c)$ & Direct proof \\
\hline
\rowcolor{lightgray}
3.c & $C_2$ & $(2;-)$ & $(c,1,1,1)$ & Prior work \\
\hline
\rowcolor{lightgray}
3.d & $C_3$ & $(3^5)$ & $(c,c,c,c,c^{-1})$ & Prior work \\
\hline
3.e & $C_3$ & $(1;3^2)$ & $(1,1,c,c^{-1})$ & Direct proof \\
\hline
\rowcolor{lightgray}
3.f.I & $C_4$ & $(4^4)$ & $(c,c,c,c)$ & Prior work \\
\hline
\rowcolor{lightgray}
3.f.II & $C_4$ & $(4^4)$ & $(c,c,c^{-1},c^{-1})$ & Prior work \\
\hline
\rowcolor{lightgray}
3.g & $C_4$ & $(2^3,4^2)$ & $(c^2,c^2,c^2,c,c)$ & Prior work \\
\hline
\rowcolor{lightgray}
3.h & $C_2 \times C_2$ & $(2^6)$ & multiple different actions & No faithful irreps \\
\hline
3.i.1 & $C_4$ & $(1;2^2)$ & $(c,1,c^2,c^2)$ & Lifting twists \\
\hline
\rowcolor{lightgray}
3.i.2 & $C_2 \times C_2$ & $(1;2^2)$ & $(c',1,c,c)$ & No faithful irreps \\
\hline
\rowcolor{lightgray}
3.j & $C_6$ & $(2^2,6^2)$ & $(c^3,c^3,c,c^{-1})$ & Prior work \\
\hline
\rowcolor{lightgray}
3.k & $C_6$ & $(2,3^2,6)$ & $(c^3,c^4,c^4,c)$ & Prior work \\
\hline
3.m & $D_3$ & $(2^4,3)$ & $(s,s,s,sr^{-1},r)$ & Lifting twists \\
\hline
3.n & $D_3$ & $(1; 3)$ & $(s,sr,r)$ & Lifting twists \\
\hline
\rowcolor{lightgray}
3.q.1 & $C_2 \times C_4$ & $(2^2,4^2)$ & multiple different actions & No faithful irreps \\
\hline
3.q.2 & $D_4$ & $(2^2,4^2)$ & $(s,s,r^{-1},r)$ & Lifting twists \\
\hline
\rowcolor{lightgray}
3.r.1 & $(C_2)^3$ & $(2^5)$ & $(c,c',c'',c'c'',c)$ & No faithful irreps \\
\hline
3.r.2 & $D_4$ & $(2^5)$ & $(s,s,sr, sr^3, r^2)$ & Lifting twists \\
\hline
3.s.1 & $Q_8$ & $(1;2)$ & $(i,j,-1)$ & Direct proof \\
\hline
3.s.2 & $D_4$ & $(1;2)$ & $(s,sr,r^2)$ & Lifting twists \\
\hline
3.y & $D_6$ & $(2^3,6)$ & $(s,sr^2,r^3,r)$ & Lifting twists \\
\hline
3.z & $A_4$ & $(2^2,3^2)$ & $((12)(34),(12)(34),(123),(321))$ & Lifting twists \\
\hline
\rowcolor{lightgray}
3.ad.1 & $C_2 \times D_4$ & $(2^3,4)$ & $(c,s,scr^{-1},r)$ & No faithful irreps \\
\hline
3.ad.2 & $C_4 \circ D_4$ & $(2^3,4)$ & $(s,c r,sr,c^{-1})$ & Direct proof \\
\hline
3.al & $S_4$ & $(2^3,3)$ & $((12),(23),(13)(24),(243))$ & Lifting twists \\
\hline
\end{tabular}
\caption{Summary of cases.  The 18 easier cases are grayed out.}
\label{table:main}
\end{table}

In Section 3.1, we observe some simple reductions that handle the ``small quotient" (omitted from table) and ``no faithful irreps" cases.  In Section 3.2, we determine which cases follow from prior work.  In Section 3.3, we determine precisely the target group $\G_V$ in all remaining cases.  In Section 3.4, we handle the ``direct proof" cases.  The remaining cases are then handled by the process of lifting twists, which we describe in Section 4.

\subsection{First reductions} \phantom{}

\para{Small quotient}  First, we can verify that ``small quotient" cases are automatically arithmetic.

\begin{lemma}
If $S/G$ has genus $h=0$ and $n \leq 3$ branch points, then $\Mod(S)^G$ is finite.
\end{lemma}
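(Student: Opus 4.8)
The plan is to produce a single point of Teichm\"uller space that $\Mod(S)^G$ is forced to fix, and then invoke the finiteness of point stabilizers for the action of $\Mod(S)$ on $\mathcal{T}(S)$.

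First I would clear away the degenerate configurations. If $n \leq 2$, then $\pi_1$ of the sphere with $n$ punctures is trivial or infinite cyclic, so any connected branched cover of $S^2$ branched over at most two points is again a sphere (the map $z \mapsto z^d$ in the cyclic case); an Euler characteristic check confirms $\chi(S) = 2$, so $g = 0$ and $\Mod(S)^G \leq \Mod(S^2) = 1$. Similarly if $G$ is trivial there is nothing to prove. So I may assume $n = 3$ and $G \neq 1$. A Riemann--Hurwitz count $\chi(S) = |G| \cdot \chi^{\mathrm{orb}}(S/G)$ then forces $g \geq 1$, and $S/G$ is a sphere with exactly three cone points, of orders $m_1,m_2,m_3$ -- a hyperbolic triangle orbifold when $g \geq 2$, a Euclidean one when $g = 1$.

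The crux is the following: the set $\mathcal{T}(S)^G \subseteq \mathcal{T}(S)$ of $G$-invariant marked conformal structures is a single point. It is nonempty: averaging a Riemannian metric over $G$ yields a $G$-invariant conformal structure, so with the identity marking we get a point of $\mathcal{T}(S)^G$ (alternatively one can pull back a structure along $S \to S/G$, or cite Nielsen realization). For uniqueness, I would use the classical identification of $\mathcal{T}(S)^G$ with the Teichm\"uller space of the quotient $2$-orbifold $S/G$; since $S/G$ is a sphere with three marked points, that Teichm\"uller space is a single point. Because $\Mod(S)^G$ commutes with $G$ it preserves $\mathcal{T}(S)^G$, hence fixes its unique point $[X]$.

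Finally, the $\Mod(S)$-stabilizer of $[X]$ is the automorphism group $\Aut(X)$ of the Riemann surface (orbifold) $X$, which is finite: by Hurwitz's theorem when $g \geq 2$, and because it is a point stabilizer for the action of $\SL_2(\Z) = \Mod(T^2)$ on $\H$ when $g = 1$. Hence $\Mod(S)^G \leq \Aut(X)$ is finite. (An alternative route uses Birman--Hilden theory to realize $\Mod(S)^G$ as a finite extension of a finite-index subgroup of the mapping class group of a sphere with at most $3$ marked points, which is finite; the Teichm\"uller argument above is shorter.) I expect the only delicate points to be the identification $\mathcal{T}(S)^G \cong \mathcal{T}(S/G)$ together with its rigidity, and making the Riemann--Hurwitz bookkeeping uniform across $g = 0,1,2,3$; everything else is standard.
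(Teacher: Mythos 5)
Your argument is correct, but it is a genuinely different route from the paper's. The paper stays entirely on the level of mapping class groups: it notes that $\Mod(S)^G$ is contained in the symmetric mapping class group $\mathrm{SMod}(S)$, that lifting gives a surjection from the liftable subgroup $\Gamma \subseteq \Mod(S^\circ/G)$ onto $\mathrm{SMod}(S)/G$, and that $\Mod(S^\circ/G)$ is finite because the sphere with at most $3$ marked points has finite mapping class group; finiteness of $\Mod(S)^G$ follows since $G$ is finite. This is the ``Birman--Hilden'' route you mention parenthetically and set aside. Your route instead uses the rigidity of the triangle orbifold in Teichm\"uller space: $\mathcal{T}(S)^G \cong \mathcal{T}(S/G)$ is a single point, $\Mod(S)^G$ fixes it, and point stabilizers are finite. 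Both arguments ultimately rest on the same phenomenon (a thrice-marked sphere has no moduli and no mapping classes), but the paper's version is a few lines with no analytic input, while yours requires the identification $\mathcal{T}(S)^G \cong \mathcal{T}(S/G)$ and its rigidity --- including the genus~$1$ Euclidean orbifolds $(2,4,4)$, $(2,3,6)$, $(3,3,3)$, where ``rigidity'' should be spelled out as the statement that $G$ contains an elliptic element of order at least $3$ with a unique fixed point in $\H = \mathcal{T}(T^2)$. One genuine error in your write-up, though a harmless one: Riemann--Hurwitz does \emph{not} force $g \geq 1$ when $n = 3$, since the spherical triangle orbifolds $(2,2,m)$, $(2,3,3)$, $(2,3,4)$, $(2,3,5)$ have positive orbifold Euler characteristic and arise from genuine nontrivial actions on $S^2$; you should simply dispose of $g = 0$ by noting $\Mod(S^2)$ is trivial and run the Teichm\"uller argument only for $g \geq 1$.
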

\begin{proof}
Let $\textrm{SMod}(S) \subseteq \Mod(S)$ be the subgroup of \emph{symmetric mapping classes}, i.e.\ mapping classes represented by lifts of homeomorphisms of $S^\circ/G$.    Let $\Gamma \subseteq \Mod(S^\circ/G)$ be the subgroup of mapping classes that lift along $S^\circ \rightarrow S^\circ/G$.  Lifting mapping classes gives a well-defined surjection $\Gamma \twoheadrightarrow \textrm{SMod}(S)/G$.  Since $\Mod(S^\circ/G)$ is finite, $G$ is finite, and $\Mod(S)^G \subseteq \textrm{SMod(S)}$, the lemma follows. 
\end{proof}

\para{No faithful irreps} Next, we show that if $V \in \Irr(G)$ is not a faithful representation, then the representation $\Phi_V$ factors through a lower genus case.  In particular, if $G$ has no faithful irreducible representations, then the arithmeticity of a $G$-action reduces entirely to lower genus cases.

\begin{lemma}\label{lem:reduction_by_transfer}
Let $V \in \Irr(G)$, and let $N \subseteq G$ be its kernel (so $V$ is also a representation of $G/N$).  Then there is an isomorphism $\Aut_{A_V}(H_1(S;\Q)_V) \cong \Aut_{A_V}(H_1(S/N;\Q)_V)$ such that the diagram
\begin{equation*}
\begin{tikzcd}
\Mod(S)^G \arrow[r] \arrow[d] & \Aut_{A_V}(H_1(S;\Q)_V) \arrow[d, "\cong"] \\
\Mod(S/N)^{G/N} \arrow[r] & \Aut_{A_V}(H_1(S/N;\Q)_V)
\end{tikzcd}
\end{equation*}
commutes.
\end{lemma}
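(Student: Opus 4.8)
The plan is to show that the quotient map $S \to S/N$ induces everything we need, by exploiting the fact that $V$, having kernel $N$, "sees" only the $N$-invariant part of homology.

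\medskip\noindent\textbf{Step 1: Identify $H_1(S;\Q)_V$ with a subspace of $H_1(S/N;\Q)$.} The branched cover $q\colon S \to S/N$ induces a transfer (pushforward) map $q_*\colon H_1(S;\Q) \to H_1(S/N;\Q)$ and a map $q^*\colon H_1(S/N;\Q) \to H_1(S;\Q)$ (think of $q^*$ as the transfer going the other way, or dually via cohomology). Since $\Q[N]$ is semisimple, the $N$-coinvariants $H_1(S;\Q)_N$ are a direct summand, and the composite $q^* q_*$ is (a nonzero multiple of) the projection onto the $N$-invariants. The standard fact for branched covers is that $q_*$ restricts to a $\Q[G/N]$-isomorphism from $H_1(S;\Q)^N$ onto $H_1(S/N;\Q)$. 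Now, because $N$ acts trivially on $V$, any isotypic copy of $V$ inside $H_1(S;\Q)$ lies in $H_1(S;\Q)^N$, so $q_*$ carries $H_1(S;\Q)_V$ isomorphically onto $H_1(S/N;\Q)_V$, and this isomorphism is $\Q[G/N]$-linear, hence $A_V$-linear (note $A_V \subseteq \Q[G/N] \subseteq \Q[G]$ is the same simple factor whether we view $V$ over $G$ or over $G/N$).

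\medskip\noindent\textbf{Step 2: Match the intersection forms.} We need the isomorphism of Step 1 to respect $\eta$, equivalently to respect $\hat\imath$ up to scalar. For a degree-$|N|$ branched cover, $\hat\imath_S(x,y) = \tfrac{1}{|N|}\,\hat\imath_{S/N}(q_* x, q_* y)$ for $x,y \in H_1(S;\Q)^N$ — this is the projection formula $q_*(x \cap q^* \beta) = (q_* x)\cap \beta$ together with $q^* q_* = |N|\cdot(\text{id on invariants})$. Scaling a skew-Hermitian form by a positive rational does not change its automorphism group, so $\Aut_{A_V}(H_1(S;\Q)_V,\eta) \cong \Aut_{A_V}(H_1(S/N;\Q)_V,\eta)$, which is the right-hand vertical isomorphism. (If the statement is read as omitting $\eta$, one only needs Step 1.)

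\medskip\noindent\textbf{Step 3: Compatibility with mapping classes.} Every $G$-equivariant homeomorphism of $S$ descends to a $G/N$-equivariant homeomorphism of $S/N$, giving the left vertical map $\Mod(S)^G \to \Mod(S/N)^{G/N}$. Naturality of $q_*$ under homeomorphisms commuting with the deck group gives the commuting square: for $f \in \Mod(S)^G$ with descent $\bar f$, we have $q_* \circ f_* = \bar f_* \circ q_*$ on $H_1(S;\Q)^N$, and restricting to the $V$-isotypic component gives exactly the asserted commutativity. The horizontal maps here are $\Phi_V$ for the $G$-action on $S$ and for the $G/N$-action on $S/N$ respectively.

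\medskip\noindent\textbf{Main obstacle.} The one genuinely load-bearing input is the behavior of the intersection form and homology of $S$ under the \emph{branched} cover $q\colon S\to S/N$ — in particular that $q_*$ is an iso onto $H_1(S/N;\Q)$ when restricted to $N$-invariants, and that it rescales $\hat\imath$ by $1/|N|$. For an \emph{unbranched} cover these are textbook, but branch points require a small argument (e.g. excising neighborhoods of the branch points, using that $H_1$ of the punctured surfaces surjects onto $H_1$ of the closed surfaces as in Theorem~\ref{thm:H1asrep}, and checking the invariant part of the kernel $\bigl(\bigoplus \Q[N/\la\varphi(x_i)\ra]\bigr)/\Q$ maps correctly). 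Everything else is formal naturality and semisimplicity of $\Q[N]$.
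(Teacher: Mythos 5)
Your proposal is correct and follows essentially the same route as the paper: the paper also reduces to the unbranched cover $S^\circ \to S^\circ/N$, uses the transfer $q^*$ (the inverse, up to the factor $|N|$, of your $q_*$) to identify $H_1(S/N;\Q)$ with $H_1(S;\Q)^N$, and passes to the closed surfaces by checking the kernels of $H_1(S^\circ;\Q)\to H_1(S;\Q)$ match under invariants --- exactly the argument you sketch in your ``main obstacle'' paragraph. Your Step 2 on rescaling $\hat\imath$ is extra relative to the paper's statement (which omits $\eta$ from the automorphism groups) but is harmless and, if anything, a useful supplement.
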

\begin{proof}
Note that the map $q:S^\circ \rightarrow S^\circ/N$ is a $G$-equivariant covering map (where $G$ acts on $S/N$ via $G/N$).  Recall the \emph{transfer homomorphism} $q^*:H_1(S^\circ/N;\Q) \rightarrow H_1(S^\circ;\Q)^N$ defined by sending a chain to the sum of its lifts; the map $q^*$ is in fact an isomorphism with inverse $(1/\v N \v)q_*$.  Let $B = \Ker(H_1(S^\circ;\Q) \rightarrow H_1(S;\Q))$ and let $\overline{B} = \Ker(H_1(S^\circ/N;\Q) \rightarrow H_1(S/N;\Q))$.  Since we are working over $\Q$, the invariants functor is exact, so $\Ker(H_1(S^\circ;\Q)^N \rightarrow H_1(S;\Q)^N) = B^N$.  Since $q^*$ takes $\overline{B}$ to $B^N$, it descends to an isomorphism $ H_1(S/N;\Q) \cong H_1(S;\Q)^N$.  The lemma follows from the observation that $H_1(S;\Q)_V \subseteq H_1(S;\Q)^N$.
\end{proof}

Note that the cases in Table \ref{table:main} where $G$ has no faithful irreducible representations are precisely those where $G$ is a product.

\subsection{Prior work} \phantom{}

\para{Hyperelliptic involution}
Note that cases 2.a and 3.a are precisely the action of the hyperelliptic involution.  In the genus 2 case, the hyperelliptic involution is central, and so this case reduces to the symplectic representation $\Mod(S) \rightarrow \Sp(H_1(S;\Q))$.  In the genus 3 case, the action is known to be arithmetic by A'Campo \cite{ACA}.

Note also in the genus 2 cases, if $G$ is of the form $H \times C_2$ where $C_2$ acts by the hyperelliptic involution, then the arithmeticity of the $G$-action reduces to the arithmeticity of the $H$-action.  This applies in particular to cases 2.f, 2.k.1, and 2.s.

\para{Cyclic covers of the sphere}
Suppose $G = C_m = \la c \ra$ and fix a cover $S \rightarrow S/G$ such that $S/G$ has genus 0 and $n$ branch points.  Let $c^{k_i} \in C_m$ be the image of the $i$th branch point under the monodromy homomorphism $\varphi:\pi_1(S/G) \rightarrow C_m$, where $k_i \in \{0, \ldots, m\}$.  In this case, the representation $\Phi$ arises as the monodromy of the family of curves
\begin{equation*}
y^m = (x-b_1)^{k_1} \cdots (x-b_{n-1})^{k_{n-1}},
\end{equation*}
where $b_1, \ldots, b_{n-1}$ are distinct points of $\C$.  

To see this, let $\PConf_{n-1}$ be the set of $(b_1, \ldots, b_{n-1}) \in \C^{n-1}$ such that $b_i \neq b_j$ for all $i$ and $j$, and for each $B = (b_i) \in \PConf_{n-1}$, let $p_B(x) = (x-b_1)^{k_1} \cdots (x-b_{n-1})^{k_{n-1}}$ and let $Y_B = \C - \{b_1, \ldots, b_{n-1}\}$.  Then we have a family of curves
\begin{equation*}
Y_B \rightarrow \{(w,x,B) \in \C^* \times \C \times \PConf_{n-1} \mid w = p_B(x)\} \rightarrow \PConf_{n-1}.
\end{equation*}
Recall that $\pi_1(\PConf_{n-1})$ is the pure braid group on $n-1$ strands $\PB_{n-1}$.  Viewing $\PB_{n-1}$ as the mapping class group of the $(n-1)$-times punctured disk $D_{n-1}$, the monodromy action $\PB_{n-1} \rightarrow \PMod(S_B)$ is the usual ``capping homomorphism" obtained by the inclusion $D_{n-1} \hookrightarrow S_{0,n}$.  The family $Y_B$ has a fiberwise branched cover
\begin{equation*}
S_B \rightarrow \{(y,x,B) \in \C^* \times \C \times \PConf_{n-1} \mid y^m = p_B(x)\} \rightarrow \PConf_{n-1}.
\end{equation*}
The cover $S_B \rightarrow Y_B$ is precisely our cover $S \rightarrow S/G$ (the $n$th branch point $x_n$ is $\infty$).  Since the monodromy of family $S_B$ is obtained by lifting the monodromy of family $Y_B$, we conclude that the monodromy of $S_B$ is commensurable to the map $\Psi$ defined in Section 2.2, and hence commensurable to $\Phi$.

This is precisely the family studied by Deligne-Mostow in \cite{DM} and \cite{Mos}.  However, they study in particular the monodromy action on $H_1(S;\C)$ (technically they study the action on $H^1(S;\C)$, but we can simply dualize).  So, we clarify the relationship between $\Q$ and $\C$ coefficients.  The intersection pairing $\hat{i}$ on $H_1(S;\Q)$ extends to a skew-Hermitian form on $H_1(S;\C)$, namely the dual of the form defined on $H^1(S;\C)$ by
\begin{equation*}
(\alpha, \beta) \mapsto \int_S \alpha \wedge \overline{\beta}.
\end{equation*}
Thus, the Reidemeister pairing $\eta$ also extends naturally to a $\C[G]$-valued form on $H_1(S;\C)$.  The embedding $H_1(S;\Q) \hookrightarrow H_1(S;\C)$ yields a complexification map
\begin{equation*}
\Theta:\Sp(H_1(S;\Q))^G \hookrightarrow \U(H_1(S;\C))^G .
\end{equation*}
In particular, the map $\Phi:\Mod(S)^G \rightarrow \Sp(H_1(S;\Q))^G$ can be viewed as a map $\Mod(S)^G \rightarrow \U(H_1(S;\C))^G$.  The generator $c$ of $G = C_m$ acts by a finite-order automorphism of $H_1(S;\C)$ which is automatically diagonalizable.  Each eigenvalue is an $m$th root of unity.  We let $\zeta = e^{2\pi i/m}$, and denote the $\zeta^k$-eigenspace by $H_1(S;\C)_{\zeta^k}$.  Since $\U(H_1(S;\C))^G$ preserves the eigenspaces,
\begin{equation*}
\U(H_1(S;\C))^G \cong \prod_{k=1}^m \U(H_1(S;\C)_{\zeta^k}).
\end{equation*}
First, we check that this decomposition matches our isotypic component decomposition of $\G = \Sp(H_1(S;\Q))^G$.  Recall that the unique faithful irreducible $\Q$-representation of $C_m$ is the $\Q$-vector space $\Q(\zeta)$, where $c$ acts by $\zeta$.

\begin{lemma}\label{lem:complexification}
The complexification map $H_1(S;\Q) \rightarrow H_1(S;\C)$ takes the isotypic component $H_1(S;\Q)_{\Q(\zeta)}$ to the sum of the eigenspaces
\begin{equation*}
\bigoplus_{k \in (\Z/m\Z)^\times} H_1(S;\C)_{\zeta^k}.
\end{equation*}
Moreover, on each eigenspace $H_1(S;\C)_{\zeta^k}$, the Reidemeister pairing restricts to a multiple of $\hat{i}$ (via the natural projection $\C[G] \rightarrow \C$ given by mapping the generator $c$ to $\zeta^k$).
\end{lemma}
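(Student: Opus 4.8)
The statement has two parts: (1) the complexification $H_1(S;\Q)_{\Q(\zeta)} \otimes_\Q \C$ equals $\bigoplus_{k \in (\Z/m\Z)^\times} H_1(S;\C)_{\zeta^k}$, and (2) on each such eigenspace the Reidemeister pairing $\eta$ becomes a scalar multiple of $\hat i$ under $\C[G] \to \C$, $c \mapsto \zeta^k$. For part (1), the key algebraic fact is that the simple factor $A_{\Q(\zeta)}$ of $\Q[C_m]$ corresponding to the faithful rational irreducible representation is exactly $\Q(\zeta)$ itself, and upon extending scalars $\Q(\zeta) \otimes_\Q \C \cong \prod_{k \in (\Z/m\Z)^\times} \C$, with the $k$th factor being the one-dimensional $\C$-representation on which $c$ acts by $\zeta^k$ (these are the Galois conjugates of the embedding $\zeta \mapsto \zeta$). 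Since the isotypic component $H_1(S;\Q)_{\Q(\zeta)}$ is, by definition, the image of the idempotent $e_{\Q(\zeta)} \in \Q[C_m]$ acting on $H_1(S;\Q)$, and idempotents and their images commute with extension of scalars, $H_1(S;\Q)_{\Q(\zeta)} \otimes_\Q \C$ is the image of $e_{\Q(\zeta)}$ acting on $H_1(S;\C)$. That image is precisely the sum of the $\zeta^k$-eigenspaces for $k$ ranging over $(\Z/m\Z)^\times$, because $e_{\Q(\zeta)}$ acts as $1$ on those primitive-root eigenspaces and as $0$ on all others — this follows from the decomposition of $e_{\Q(\zeta)}$ into primitive idempotents over $\C$.

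First I would set up the Wedderburn/eigenspace dictionary carefully: write $\C[C_m] \cong \prod_{k=0}^{m-1}\C$, where the $k$th projection sends $c \mapsto \zeta^k$, and note that $H_1(S;\C)_{\zeta^k}$ is the image of the corresponding primitive idempotent $f_k = \frac{1}{m}\sum_{j} \zeta^{-kj} c^j$. Then identify $e_{\Q(\zeta)} = \sum_{k \in (\Z/m\Z)^\times} f_k$ (the rational idempotent cutting out the faithful component is the sum of the conjugate complex primitive idempotents). Applying this to $H_1(S;\C)$ immediately gives part (1). For part (2), I would use the defining formula $\eta(x,y) = \sum_{g\in G}\hat i(x,gy)\,g$ together with the observation that the projection $\pi_k: \C[C_m]\to\C$, $c\mapsto\zeta^k$, composed with $\eta$ gives $\pi_k(\eta(x,y)) = \sum_{j}\hat i(x, c^j y)\zeta^{kj}$. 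Restricting $x,y$ to the eigenspace $H_1(S;\C)_{\zeta^k}$, we have $c^j y = \zeta^{kj} y$, so $\pi_k(\eta(x,y)) = \sum_j \zeta^{kj}\hat i(x,\zeta^{kj}y) = \Big(\sum_j \zeta^{2kj}\Big)\hat i(x,y)$ — wait, more carefully: $\hat i(x, c^j y) = \hat i(x, \zeta^{kj} y) = \zeta^{kj}\hat i(x,y)$, so $\pi_k(\eta(x,y)) = \sum_{j=0}^{m-1}\zeta^{kj}\cdot\zeta^{kj}\hat i(x,y) = \big(\sum_j \zeta^{2kj}\big)\hat i(x,y)$; this vanishes unless $2k \equiv 0$, which is wrong. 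The correct bookkeeping is that the relevant scalar comes from pairing the $\zeta^k$-eigenspace against the $\zeta^{-k}$-eigenspace (since $\hat i$ is alternating and $c$ is an isometry, $H_1(S;\C)_{\zeta^k}$ and $H_1(S;\C)_{\zeta^\ell}$ are $\hat i$-orthogonal unless $\zeta^k\zeta^\ell = 1$). So on $H_1(S;\C)_{\zeta^k}$, one pairs via the form landing in the $\zeta^{-k} = \bar\zeta^k$ factor, and then $\pi_{-k}(\eta(x,y)) = \sum_j \zeta^{-kj}\hat i(x,\zeta^{kj}y)$... I need $\hat i(x, c^j y)$ where $x$ is in the $\zeta^k$-eigenspace and $y$ ranges over the same eigenspace — but then $\hat i(x, c^j y) = \zeta^{kj}\hat i(x,y)$ and $\hat i$ restricted to a single eigenspace of an isometry of odd-type... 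I would work this out by instead restricting $\hat i$ to the subspace $H_1(S;\C)_{\zeta^k} \oplus H_1(S;\C)_{\zeta^{-k}}$ where it is a perfect pairing between the two summands, and checking the scalar on that honest pairing.

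**Main obstacle.** The conceptual content is entirely standard character-theory-of-cyclic-groups bookkeeping, so the only real danger is a sign/exponent error in identifying exactly which projection $\C[C_m]\to\C$ to use and exactly which scalar multiple of $\hat i$ appears. The careful way to nail this down is: (a) observe $c\in\Sp(H_1(S;\C))$ forces eigenspaces $H_1(S;\C)_{\zeta^a}$ and $H_1(S;\C)_{\zeta^b}$ to be $\hat i$-orthogonal whenever $\zeta^{a+b}\neq 1$ (since $\hat i(x,y) = \hat i(cx,cy) = \zeta^{a+b}\hat i(x,y)$ for $x,y$ in those eigenspaces); (b) therefore $\hat i$ restricts to a perfect pairing $H_1(S;\C)_{\zeta^k}\times H_1(S;\C)_{\zeta^{-k}}\to\C$; (c) compute, for $x$ in the $\zeta^{-k}$-eigenspace and $y$ in the $\zeta^{k}$-eigenspace, $\pi_k(\eta(x,y)) = \sum_{j}\hat i(x,c^j y)\zeta^{kj} = \sum_j \zeta^{kj}\zeta^{kj}\hat i(x,y)$ — no: $c^j y = \zeta^{kj} y$ gives $\hat i(x, c^j y) = \zeta^{kj}\hat i(x,y)$, so the sum is $\big(\sum_j\zeta^{2kj}\big)\hat i(x,y)$, which is again $0$ unless $m\mid 2k$; this shows I should instead be using $\pi_{-k}$ on the factor picked out by pairing in the same eigenspace — i.e. the statement "restricts to a multiple of $\hat i$ via $c\mapsto\zeta^k$ on $H_1(S;\C)_{\zeta^k}$" should really be understood via the Hermitian (not bilinear) structure: the form $(x,y)\mapsto \pi_k(\eta(x,\bar y))$ or the intrinsic skew-Hermitian form on $H_1(S;\C)_{\zeta^k}$ obtained from $\eta$. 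Untangling this — deciding whether the cleanest formulation uses $\hat i$ directly or the associated Hermitian form, and getting the scalar ($\pm m$, or $\pm\zeta^{?}$) correct — is where I would spend my care; everything else is a short computation with geometric series $\sum_j \zeta^{jt} = m\,[\,m\mid t\,]$.
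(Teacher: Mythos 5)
Your proof of the first claim is correct and is essentially the paper's own argument: identify the rational idempotent $e_{\Q(\zeta)}$ with the sum of the complex primitive idempotents $f_k$ for $k \in (\Z/m\Z)^\times$ via the Wedderburn decompositions of $\Q[C_m]$ and $\C[C_m]$, and use the fact that the isotypic projections are multiplication by idempotents.

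The second claim is where your argument has a genuine gap, and the source is a convention you never pin down: how $\hat i$ is extended from $H_1(S;\Q)$ to $H_1(S;\C)$. The paper extends it \emph{sesquilinearly} --- it is introduced just before the lemma as the dual of the skew-Hermitian form $(\alpha,\beta)\mapsto\int_S\alpha\wedge\overline{\beta}$ on $H^1(S;\C)$ --- so that $\hat i(x,\lambda y)=\overline{\lambda}\,\hat i(x,y)$. With that convention the computation closes in one line: for $x,y\in H_1(S;\C)_{\zeta^k}$ one has $\hat i(x,c^\ell y)=\hat i(x,\zeta^{k\ell}y)=\zeta^{-k\ell}\hat i(x,y)$, and applying the isomorphism $e_k\C[G]\cong\C$ (which sends $c^\ell\mapsto\zeta^{k\ell}$) to $\eta(x,y)=e_k\sum_\ell\hat i(x,c^\ell y)c^\ell$ gives $\sum_\ell\zeta^{-k\ell}\zeta^{k\ell}\hat i(x,y)=m\,\hat i(x,y)$. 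You instead took $\hat i$ to be $\C$-bilinear, correctly found that this forces each eigenspace with $2k\not\equiv 0 \pmod m$ to be isotropic so that the sum $\sum_\ell\zeta^{2k\ell}$ vanishes, correctly suspected that the Hermitian structure is the intended one, but then stopped, explicitly deferring the choice of formulation and the value of the scalar. That deferral is precisely the content of the second claim: with the bilinear extension the restriction of $\eta$ to a single eigenspace is zero and the statement would be vacuous, so the sesquilinear extension is not an optional reformulation but the hypothesis under which the lemma is asserted. As written, part (2) of your proof is therefore incomplete; committing to the sesquilinear convention and redoing your geometric-series computation yields the multiple $m$ and finishes the argument.
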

\begin{proof}
The first statement is purely a fact from the representation theory of $G = C_m$.  On the one hand, the Wedderburn decomposition of $\Q[G]$ is realized by the well-known isomorphisms
\begin{equation*}
\Q[G] \cong \Q[x]/(x^m-1) \cong \prod_{d \mid m} \Q[x]/(\Phi_d(x))
\end{equation*}
where $\Phi_d(x)$ is the $d$th cyclotomic polynomial.
On the other hand,
\begin{equation*}
\C[G] \cong \prod_{k=1}^m \C
\end{equation*}
where the projection onto the $k$th factor sends $c$ to $\zeta^k$.  Thus the inclusion $\Q[G] \hookrightarrow \C[G]$ maps
\begin{equation*}
\Q[x]/(\Phi_m(x)) \rightarrow \prod_{k \in (\Z/m\Z)^\times} \C
\end{equation*}
via the maps $x \mapsto \zeta^k$, which precisely correspond to the embeddings $\Q(\zeta) \hookrightarrow \C$.  In particular, if we let $e_{\Q(\zeta)}$ be the idempotent of $\Q(\zeta) \subseteq \Q[G]$ and let $e_k$ be the idempotents of $\C[G]$, then
\begin{equation*}
e_{\Q(\zeta)} = \sum_{k \in (\Z/d\Z)^\times} e_k.
\end{equation*}
Since the projections $H_1(S;\Q) \rightarrow H_1(S;\Q)_{\Q(\zeta)}$ and $H_1(S;\C) \rightarrow \bigoplus H_1(S;\C)_{\zeta^k}$ are given by multiplication by idempotents, the first statement of the lemma follows.

To prove the second statement, take any $x,y \in H_1(S;\C)_{\zeta^k}$.  Viewing $\eta(x,y)$ as an element of $\C[G]$, 
\begin{equation*}
\eta(x,y) = e_k\eta(x,y) = e_k \sum_{\ell=1}^m \hat{i}(x, c^\ell y)c^\ell
= e_k \sum_{\ell=1}^m \hat{i}(x, \zeta^{k\ell}y)c^\ell.
\end{equation*}
Under the isomorphism $e_k\C[G] \cong \C$, this maps to
\begin{equation*}
\sum_{\ell=1}^m \hat{i}(x, \zeta^{k\ell}y)\zeta^{k\ell}
= \sum_{\ell=1}^m \hat{i}(x, y)\zeta^{-k\ell}\zeta^{k\ell}
= m\hat{i}(x,y).
\end{equation*}
\end{proof}

Lemma \ref{lem:complexification} tells us that $\Theta:\G \hookrightarrow U(H_1(S;\C))^G$ takes the subgroup 
\begin{equation*}
\G_{\Q(\zeta)} = \Aut_{\Q[G]}(H_1(S;\Q)_{\Q(\zeta)}, \eta)
\end{equation*}
to the subgroup
\begin{equation*}
\prod_{k \in (\Z/m\Z)^\times} \U(H_1(S;\C)_{\zeta^k}).
\end{equation*}
Observe that the eigenspace $H_1(S;\C)_{\zeta^{-k}}$ is conjugate to the eigenspace $H_1(S;\C)_{\zeta^k}$. Recall that if $f$ is an automorphism of a $\Q$-vector space $W$, then the induced map $f_\C$ on the complexification $W_\C$ commutes with conjugation.  This means that the action of $\Theta(\G_{\Q(\zeta)})$ on $H_1(S;\C)_{\zeta^{-k}}$ is determined by its action on $H_1(S;\C)_{\zeta^k}$, and thus $\Theta(\G_{\Q(\zeta)})$ embeds into the product
\begin{equation*}
\mathcal{H} = \prod_{k \in (\Z/m\Z)^\times/\pm 1} \U(H_1(S;\C)_{\zeta^k}).
\end{equation*}
The isotypic component $H_1(S;\Q)_{\Q(\zeta)}$ is isomorphic to $\Q(\zeta)^\ell$ for some $\ell$, and the group $\G_{\Q(\zeta)}$ is a $K$-defined algebraic group isomorphic to $\U(h;\Q(\zeta))$, where $K = \Q(\zeta + \zeta^{-1})$ and $h$ is a skew-Hermitian form on $\Q(\zeta)^\ell$ (given by the Reidemeister pairing).  The embedding of $\Theta(\G_{\Q(\zeta)})$ corresponds to the Weil restriction of scalars; in particular, if we let $S^\infty$ denote the set of embeddings $\Q(\zeta) \hookrightarrow \C$ up to conjugation, then the map $\Theta(\G_{\Q(\zeta)}) \hookrightarrow \mathcal{H}$ corresponds to the map
\begin{equation*}
\U(h; \Q(\zeta)) \hookrightarrow \prod_{\sigma \in S^\infty} \U(\sigma(h))
\end{equation*}
given by sending a matrix to the tuple of its Galois conjugates.

Now, we can recall Deligne and Mostow's results.  For $1 \leq i \leq n$ let $\mu_i = k_i/m$.  Suppose that the following two conditions hold:
\begin{enumerate}
\item $\sum_{i=1}^n k_i = 2$,
\item For each $i,j$ such that $\mu_i+\mu_j < 1$, $(1 - \mu_i - \mu_j)^{-1} \in \Z$ if $i \neq j$ and $(1 - \mu_i - \mu_j)^{-1} \in \frac{1}{2}\Z$ if $i = j$.
\end{enumerate}
Under these hypotheses, Deligne and Mostow prove that $\U(H_1(S;\C)_\zeta) \cong U(1,n-3)$ and the image of $\Theta(\G_{\Q(\zeta)})$ in $\U(H_1(S;\C)_\zeta)$ is a lattice.

We note that if $m$ is equal to 3, 4, or 6, then $\Q(\zeta + \zeta^{-1}) = \Q$ and there is only one embedding $\Q(\zeta) \hookrightarrow \C$ up to conjugation.  Thus, the product $\mathcal{H}$ contains only the single factor $\U(H_1(S;\C)_\zeta)$ and the embedding $\G_{\Q(\zeta)} \rightarrow \Theta(\G_{\Q(\zeta)}) \rightarrow \mathcal{H}$ is an isomorphism.  In these cases, if the hypothesis of Deligne and Mostow holds, we conclude that $\Im(\Phi_{\Q(\zeta)})$ is arithmetic.  This applies to every cyclic action on Table \ref{table:main} with a genus 0 quotient except cases 2.a, 3.a, and 3.f.I.

\para{Remaining cases}
There are two remaining cases that follow from prior work:
\begin{enumerate}
\item Case 3.f.I, which follows directly from McMullen \cite[Corollary~11.7]{McM}.
\item Case 3.c, which follows directly from Looijenga \cite{Loo1}.
\end{enumerate}

\subsection{Determining the target algebraic group}
Using the character formula in Theorem \ref{thm:H1asrep}, one can check that in every case in Table \ref{table:main}, $H_1(S;\Q)$ contains at most one faithful irreducible representation.  By Lemma \ref{lem:reduction_by_transfer}, this is the only isotypic component we need to consider in each action.  So, we summarize the following information for each remaining case of Table \ref{table:main}:
\begin{enumerate}
\item the faithful representation $V \in \Irr(G)$
\item the corresponding simple factor $M_n(D)$ of $\Q[G]$, the restriction of the inversion involution $\tau$ to $M_n(D)$, and the corresponding fixed subfield $K$.
\item the isotypic component $H_1(S;\Q)_V$
\item the Reidemeister pairing $\eta$ on $H_1(S;\Q)_V$
\item the automorphism group $\G_V = \Aut_A(H_1(S;\Q)_V, \eta)$ and its integer points $\G(\mathcal{O}_K)$.
\end{enumerate}

\para{Cyclic covers}
Suppose $G = C_m = \la c \ra$.
\begin{enumerate}
\item The unique faithful $V \in \Irr(G)$ is the $\Q$-vector space $\Q(\zeta)$ where $\zeta = e^{2\pi i/m}$ and $c$ acts by $\zeta$.  Note that for $m = 2$ this is also called the \emph{sign representation}, denoted $\Q_-$.
\item The corresponding simple factor is the field $\Q(\zeta)$ ($n=1$, $D=\Q(\zeta_m)$); the projection $\Q[G] \rightarrow \Q(\zeta)$ is given by $c \mapsto \zeta$.  The involution $\tau$ restricts to complex conjugation on $\Q(\zeta)$, and $K = \Q(\zeta + \zeta^{-1})$.
\item The isotypic component is $\Q(\zeta)^\ell$ for some $\ell \in \Z_{>0}$.
\item The Reidemeister pairing is a skew-Hermitian form $h$ on the $\Q(\zeta)$-vector space $\Q(\zeta)^\ell$.  If $m=2$, $h$ is in fact a symplectic form on $\Q^\ell$.
\item $\G_V \cong \U(h;\Q(\zeta))$ and $\G_V(\mathcal{O}_K) \cong \U(h;\Z[\zeta])$.  If $m=2$, then in particular $K=\Q$, $\G_V \cong \Sp(h; \Q)$, and $\G_V(\Z) \cong \Sp(h; \Z)$.
\end{enumerate}

\para{Dihedral covers}
Suppose $G = D_m$ for $m \in \{3,4,6\}$.
\begin{enumerate}
\item The unique faithful $V \in \Irr(G)$ is the two dimensional representation
\begin{equation*}
r \mapsto
\begin{pmatrix}
0 & -1 \\ 1 & 2\cos(2\pi/m)
\end{pmatrix},\ 
s \mapsto
\begin{pmatrix}
0 & 1 \\ 1 & 0
\end{pmatrix}.
\end{equation*}

\item
The corresponding simple factor is $M_2(\Q)$; the projection $\Q[G] \rightarrow M_2(\Q)$ is precisely given by the representation $V$.  For $m = 4$, the involution $\tau$ restricts to the transpose involution $B \mapsto B^t$.  For $m=3$ and $m=6$, $\tau$ does not directly restrict to the transpose, but we claim it is conjugate over $\Q$ to the transpose.  From this claim, we conclude that $K = \Q$.  The claim follows because any two involutions on a simple $\Q$-algebra that agree on the center are conjugate \cite[Lemma~2.10]{PR}, so it is enough to check that $\tau$ fixes the center of $M_2(\Q)$.  But this true since $\tau$ is conjugate over $\R$ to the transpose, as $V$ is conjugate over $\R$ to an orthogonal representation.

\item
In every case we consider, the isotypic component is $V^2$, which is isomorphic as an $M_2(\Q)$-module to $M_2(\Q)$ itself.

\item
The Reidemeister pairing is determined by the single matrix $F:= \eta(I_2, I_2) \in M_2(\Q)$.  The skew-Hermitian condition implies that $F = -\tau(F)$; in the case $m=4$ where $\tau$ is the transpose, this implies that $F$ is a multiple of the matrix $J = \left(\begin{smallmatrix} 0 & 1 \\ -1 & 0 \end{smallmatrix} \right)$.

\item
Note that $\Aut_{M_2(\Q)}(M_2(\Q)) = \GL_2(\Q)$ and
\begin{equation*}
\G_V = \{B \in \GL_2(\Q) \mid BF\tau(B) = F\}.
\end{equation*}
In the case $m=4$, the condition $BF\tau(B) = F$ is equivalent to $BJB^t = J$, and we directly obtain that $\G_V \cong \Sp_2(\Q) = \SL_2(\Q)$ and $\G_V(\Z) \cong \SL_2(\Z)$.  In the cases $m=3$ and $m=6$, the fact that $\tau$ is conjugate over $\Q$ to the transpose implies that that $\G_V$ is conjugate over $\Q$ to $\Sp_2(\Q) = \SL_2(\Q)$, and hence $\G_V \cong \SL_2(\Q)$ and $\G_V(\Z) \cong \SL_2(\Z)$.
\end{enumerate}

\para{Case 3.s.1}
In this case $G = Q_8$ (c.f.\ \cite[Section~9.6]{GLLM}).
\begin{enumerate}
\item The representation $V \in \Irr(G)$ is a 4-dimensional representation $G \rightarrow \SL_2(\Q(i))$ given by
\begin{equation*}
i \mapsto 
\begin{pmatrix}
0 & 1 \\ -1 & 0
\end{pmatrix},\ 
j \mapsto
\begin{pmatrix}
i & 0 \\ 0 & -i
\end{pmatrix}.
\end{equation*}

\item
The corresponding simple factor is the division algebra
\begin{equation*}
D = \left\{
\begin{pmatrix}
\alpha & \beta \\ -\overline{\beta} & \overline{\alpha}
\end{pmatrix}
\mid \alpha,\beta \in \Q(i)\right\}.
\end{equation*}
The projection $\Q[G] \rightarrow D$ is precisely given by the representation $V$.  The inversion involution $\tau$ restricts to the conjugate transpose $B \mapsto B^*$, and $K = \Q(i+i^{-1}) = \Q$.

\item
The isotypic component is $V$, which is isomorphic as a $D$-module to $D$ itself.

\item
The Reidemeister pairing is determined by the single matrix $F := \eta(I_2, I_2) \in D$.  The skew-Hermitian property implies that $F^* = -F$.

\item
Note that $\Aut_D(D) = D^\times$.  Now $\G_V = \{B \in D^\times \mid BFB^* = F\}$, and $\G_V(\Z)$ is the subgroup with entries in $\Z[i]$.
\end{enumerate}

\para{Non-free cases}
The final three cases (3.z, 3.ad.2, and 3.al) are slightly different because the isotypic component $H_1(S;\Q)_V$ is not a free module over the corresponding simple factor.  We will first examine such a scenario in generality, and then we examine our specific cases below.  

Suppose $V \in \Irr(G)$ appears in $H_1(S;\Q)$ with multiplicity $k$.  Suppose further that the simple factor is $M_n(L)$ for a number field $L$ where $k < n$.  Then $H_1(S;\Q)_V$ is isomorphic as an $M_n(L)$-module to the set of $n \times k$ matrices with the left action of $M_n(L)$.  In particular $H_1(S;\Q)_V$ is spanned over $M_n(L)$ by the block matrix
\begin{equation*}
E := 
\begin{pmatrix}
I_k \\ 0
\end{pmatrix}.
\end{equation*}

First, we can identify the endomorphisms of $H_1(S;\Q)_V$.

\begin{lemma}
As rings, $\End_{M_n(L)}(H_1(S;\Q)_V) \cong M_k(L)^{\textrm{op}}$.
\end{lemma}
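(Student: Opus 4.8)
The plan is to describe endomorphisms of the $M_n(L)$-module $W := H_1(S;\Q)_V$ concretely using the presentation $W \cong \{n\times k \text{ matrices over } L\}$ with the left $M_n(L)$-action, and then to identify the commutant of this action with $M_k(L)^{\mathrm{op}}$ acting by right multiplication. First I would recall that $W$ is spanned over $M_n(L)$ by the block matrix $E = \left(\begin{smallmatrix} I_k \\ 0 \end{smallmatrix}\right)$, so any $M_n(L)$-linear endomorphism $f$ of $W$ is determined by $f(E)$, which is some $n\times k$ matrix over $L$. The constraint is that $f$ must be well-defined: if $ME = M'E$ for $M, M' \in M_n(L)$, then we need $Mf(E) = M'f(E)$. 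Writing $M$ in block form $\left(\begin{smallmatrix} M_{11} & M_{12} \\ M_{21} & M_{22}\end{smallmatrix}\right)$ with $M_{11}$ of size $k\times k$, one sees $ME = \left(\begin{smallmatrix} M_{11} \\ M_{21}\end{smallmatrix}\right)$, so $ME = 0$ iff $M_{11} = 0$ and $M_{21} = 0$; this forces $f(E)$ to have the form $\left(\begin{smallmatrix} P \\ 0 \end{smallmatrix}\right)$ for a unique $P \in M_k(L)$ (otherwise one can choose $M$ with $M_{11} = M_{21} = 0$ but $M f(E) \neq 0$). Conversely, every $P \in M_k(L)$ yields a well-defined endomorphism $f_P$ by $f_P(ME) := M\left(\begin{smallmatrix} P \\ 0 \end{smallmatrix}\right)$, which is just right multiplication of the $n\times k$ matrix $ME$ by $P$.

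Next I would check that $P \mapsto f_P$ is a ring anti-isomorphism $M_k(L) \to \End_{M_n(L)}(W)$. It is clearly additive and bijective by the previous paragraph. For composition, $f_P \circ f_Q$ sends $ME \mapsto f_P(M\left(\begin{smallmatrix} Q \\ 0 \end{smallmatrix}\right))$; since $M\left(\begin{smallmatrix} Q \\ 0\end{smallmatrix}\right) = (M E')$ where $E' = \left(\begin{smallmatrix} Q \\ 0 \end{smallmatrix}\right)$ — more cleanly, $f_P$ acts on any element $w \in W$ (viewed as an $n\times k$ matrix) by $w \mapsto wP$, so $f_P(f_Q(w)) = (wQ)P = w(QP) = f_{QP}(w)$. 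Hence $f_P \circ f_Q = f_{QP}$, giving the anti-isomorphism, i.e. $\End_{M_n(L)}(W) \cong M_k(L)^{\mathrm{op}}$ as rings.

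I expect the only subtle point — and the one worth spelling out rather than leaving implicit — is the well-definedness constraint: the fact that an $M_n(L)$-linear map is pinned down by its value on the single generator $E$ but that this value cannot be arbitrary, it must lie in the ``column space'' $\left(\begin{smallmatrix} M_k(L) \\ 0 \end{smallmatrix}\right)$, precisely because $E$ has a nontrivial annihilator in $M_n(L)$ (unlike the free case $k = n$, where $E = I_n$ is a free generator and one recovers $\End_{M_n(L)}(M_n(L)) \cong M_n(L)^{\mathrm{op}}$). Everything else is a routine matrix computation. One stylistic choice: rather than arguing via annihilators, one can instead just observe directly that $W \cong \Hom_{M_n(L)}(M_n(L), W)$-type reasoning reduces the problem to the standard fact that for a left $R$-module that is a direct summand $R^{\oplus k}$ of... — but the hands-on argument above is shortest and self-contained, so I would present that.
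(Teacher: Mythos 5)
Your proof is correct and follows essentially the same route as the paper: both identify $H_1(S;\Q)_V$ with $n\times k$ matrices generated by $E$, show that an $M_n(L)$-linear endomorphism is determined by its value on $E$, which is forced to lie in $\left(\begin{smallmatrix} M_k(L) \\ 0 \end{smallmatrix}\right)$, and verify that the resulting correspondence (right multiplication by $P\in M_k(L)$) reverses composition, yielding $M_k(L)^{\mathrm{op}}$. The only cosmetic difference is that you derive the constraint on $f(E)$ from the annihilator of $E$, whereas the paper uses the idempotent $\left(\begin{smallmatrix} I_k & 0 \\ 0 & 0 \end{smallmatrix}\right)$; these are equivalent.
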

\begin{proof}
We have a map $M_k(L) \rightarrow \End_{M_n(L)}(H_1(S;\Q))$ given by $C \mapsto T_C$, where $T_C$ is the unique $M_k(L)$-linear map that multiplies $E$ by the following block matrix:
\begin{equation*}
T_C(E) =
\begin{pmatrix}
C & 0 \\ 0 & 0
\end{pmatrix}E.
\end{equation*}
To see that $T_C$ is a well-defined endomorphism, suppose that $AE = BE$ for $A,B \in M_n(L)$.  Then $A$ and $B$ have the same first $k$ columns, and so
\begin{equation*}
T_C(AE) = AT_C(E) = A
\begin{pmatrix}
C & 0 \\ 0 & 0
\end{pmatrix}E
= B
\begin{pmatrix}
C & 0 \\ 0 & 0
\end{pmatrix}
E
= BT_C(E)
= T_C(BE).
\end{equation*}
We can see that this map is injective, additive, and satisfies $T_{C_1C_2} = T_{C_2}T_{C_1}$.  To see that it is surjective, note that every $T \in \End_{M_k(L)}(H_1(S;\Q)_V)$ is determined by where it sends $E$, and so it suffices to show that $T$ must send $E$ to a $n \times k$ matrix where the bottom $n-k$ rows are all zero.  But this follows since
\begin{equation*}
\begin{pmatrix}
I_k & 0 \\ 0 & 0
\end{pmatrix}
T(E)
= T\left(
\begin{pmatrix}
I_k & 0 \\ 0 & 0
\end{pmatrix}E
\right)
= T(E).  \qedhere
\end{equation*}
\end{proof}

Now, we can better understand the Reidemeister pairing $\eta$ in some particular cases.  Note first that the ``Gram matrix" of $\eta$ on $H_1(S;\Q)_V$ is the $1 \times 1$ matrix $\left( \eta(E,E) \right) \in M_1(M_n(L))$.  In other words, $\eta$ is completely determined by the matrix $F := \eta(E,E) \in M_n(L)$.  Note that the relation
\begin{equation*}
F = \eta(E,E) = \eta\left(
\begin{pmatrix}
I_k & 0 \\ 0 & 0
\end{pmatrix}E,E \right)
=
\begin{pmatrix}
I_k & 0 \\ 0 & 0
\end{pmatrix}
\eta(E,E)
=
\begin{pmatrix}
I_k & 0 \\ 0 & 0
\end{pmatrix}F
\end{equation*}
implies that
\begin{equation*}
F = \begin{pmatrix}
\star \\ 0
\end{pmatrix}
\end{equation*}
for some $k \times n$ matrix $\star$.  Now, suppose first that $\tau$ restricts to the transpose.  In this case, the relation $F = -\tau(F)$ implies that $F$ is a multiple of 
\begin{equation*}
\begin{pmatrix}
J & 0 \\ 0 & 0
\end{pmatrix}
\end{equation*}
where $J \in M_k(L)$ is a skew-symmetric matrix.  If instead $\tau$ restricts to the conjugate transpose, then the relation $F = -\tau(F)$ implies that $F$ is a multiple of
\begin{equation*}
\begin{pmatrix}
J & 0 \\ 0 & 0
\end{pmatrix}
\end{equation*}
where $J \in M_k(L)$ is a skew-Hermitian matrix.

\para{Case 3.z}
In this case $G = A_4$.
\begin{enumerate}
\item
The representation $V \in \Irr(G)$ is the 2-dimensional standard representation.  In particular $V$ is the restriction of the representation of $S_4$ given by
\begin{equation*}
(1,2) \mapsto
\begin{pmatrix}
0 & 1 & 0 \\ 1 & 0 & 0 \\ 0 & 0 & 1
\end{pmatrix},\ 
(2,3) \mapsto 
\begin{pmatrix}
0 & 0 & -1 \\ 0 & 1 & 0 \\ -1 & 0 & 0
\end{pmatrix},\ 
(3,4) \mapsto
\begin{pmatrix}
0 & -1 & 0 \\ -1 & 0 & 0 \\ 0 & 0 & 1
\end{pmatrix}.
\end{equation*}

\item
The corresponding simple factor is $M_3(\Q)$; the projection $\Q[G] \rightarrow M_3(\Q)$ is precisely given by the representation $V$.  The inversion involution $\tau$ restricts to the transpose involution $B \mapsto B^t$, and $K = \Q$.

\item
The isotypic component is $V^2$.  As an $M_2(\Q)$-module, we can take $V^2$ to be the space of $3 \times 2$ matrices with the left action of $M_2(\Q)$.  

\item
The Reidemeister pairing is completely determined by the matrix $F := \la E, E \ra \in M_3(\Q)$.  Since $\tau$ restricts to the transpose, we conclude that $F$ is a multiple of
\begin{equation*}
\begin{pmatrix}
0 & 1 & 0 \\ -1 & 0 & 0 \\ 0 & 0 & 0
\end{pmatrix}.
\end{equation*}

\item
From the isomorphism $\End_{M_3(\Q)}(V^2) \cong M_2(\Q)$ and our computation of $F$, we conclude as in the dihedral cases that $\G_V \cong \SL_2(\Q)$ and $\G_V(\Z) \cong \SL_2(\Z)$.
\end{enumerate}

\para{Case 3.ad.2}
In this case $G = C_4 \circ D_4$.
\begin{enumerate}
\item
The representation $V \in \Irr(G)$ is the 4-dimensional representation $G \rightarrow \GL_2(\Q(i))$ given by
\begin{equation*}
c \mapsto 
\begin{pmatrix}
i & 0 \\ 0 & i
\end{pmatrix},\ 
r \mapsto
\begin{pmatrix}
0 & -1 \\ 1 & 0
\end{pmatrix},\ 
s \mapsto
\begin{pmatrix}
0 & 1 \\ 1 & 0
\end{pmatrix}.
\end{equation*}

\item
The corresponding simple factor is $M_2(\Q(i))$.  The projection $\Q[G] \rightarrow M_2(\Q(i))$ is precisely given by $V$.  The involution $\tau$ descends to the conjugate transpose $B \mapsto B^*$.

\item
The isotypic component is $V$.  As an $M_2(\Q(i))$-module, we can take $V$ to be the space of $2 \times 1$ matrices with the left action of $M_2(\Q(i))$.

\item
The Reidemeister pairing is completely determined by the matrix $F := \la E, E \ra \in M_2(\Q(i))$.  Since $\tau$ is the conjugate transpose, we conclude that $F$ is a multiple of
\begin{equation*}
\begin{pmatrix}
i & 0 \\ 0 & 0
\end{pmatrix}.
\end{equation*}

\item
$\End_{M_2(\Q(i))}(V) \cong M_1(\Q(i)) \cong \Q(i)$.  It follows that $\G_V \cong \U(1;\Q(i))$ and $\G_V(\Z) \cong \U(1;\Z[i])$.
\end{enumerate}

\para{Case 3.al}
In this case $G = S_4$.  The representation is the sign representation tensored with the standard representation.  This case proceeds precisely as case 3.z, and we conclude that $\G_V \cong \SL_2(\Q)$ and $\G_V(\Z) \cong \SL_2(\Z)$.

\subsection{Direct proofs of arithmeticity}

We now give direct proofs for cases 2.b, 3.b, 3.e, 3.s.1, and 3.ad.2.  For cases 2.b, 3.b, and 3.e, we directly find elements of $\Im(\Phi_V)$ that generate a finite index subgroup of $\mathcal{G}_V(\Z)$.  The methods of Section 4 also apply to cases 2.b and 3.e, but these actions are simple enough that we can understand them directly; the methods of Section 4 do not apply to 3.b because the codomain of $\Phi_V$ is $\Sp(4,\Z)$ in this case.  For cases 3.s.1 and 3.ad.2, we use the results of Section 3.2 to conclude that $\Im(\Phi_V)$ is finite.

Throughout these proofs, for a simple closed curve $c$, let $T_c$ denote the left Dehn twist around $c$.

\para{Case 2.b}
\begin{figure}[h]
\center
\includegraphics[scale=.2]{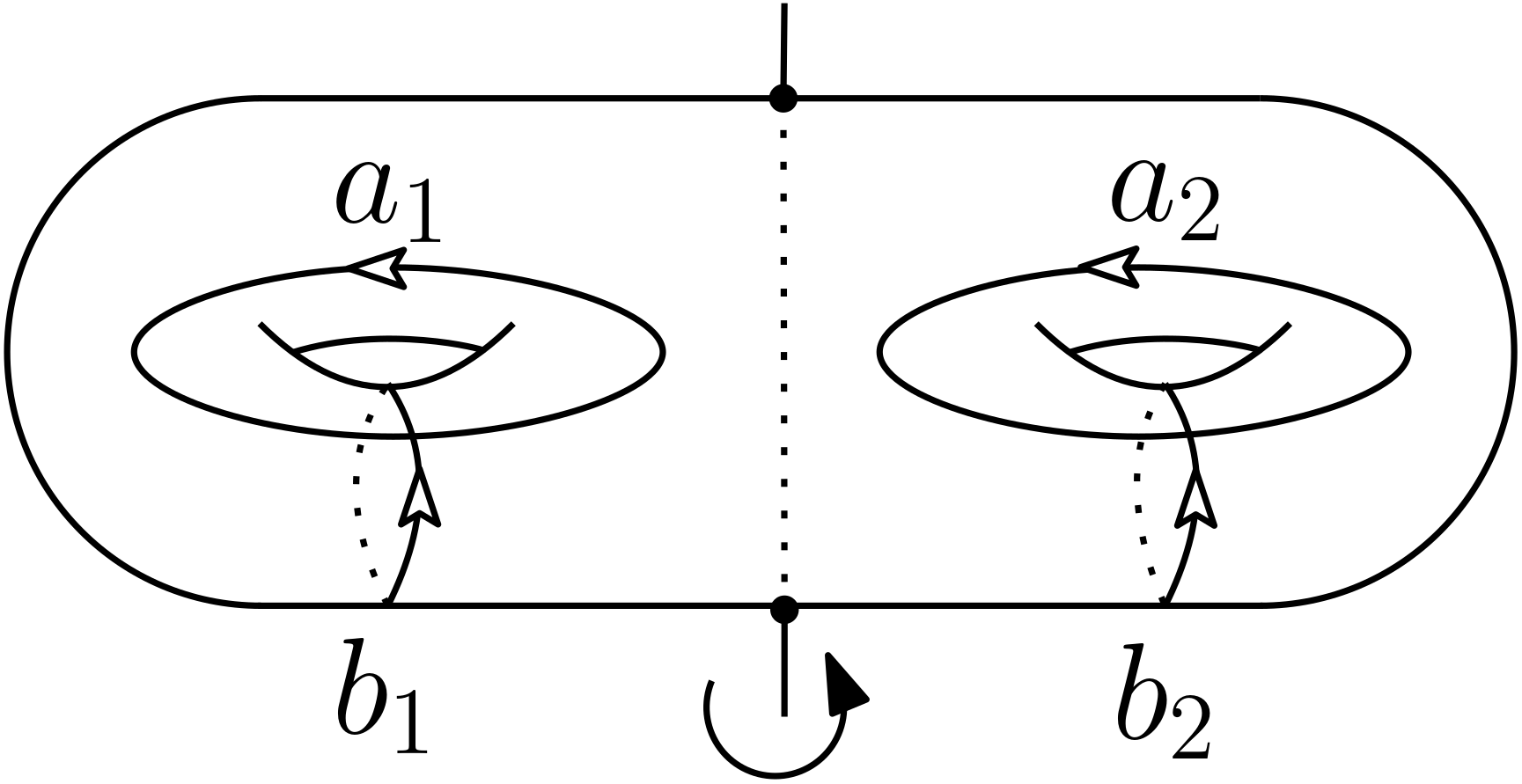}
\caption{The action of $C_2$ in Case 2.b.}
\label{fig:2-b}
\end{figure}

In this case, $G = C_2$ and $S$ is a genus 2 surface.  The action of $G$ on $S$ is the order 2 rotation pictured in Figure \ref{fig:2-b}.  Let $a_i$, $b_i$ be the standard basis of $H_1(S;\Q)$.  We can see directly that
\begin{equation*}
H_1(S;\Q) = \Q\{a_1 - a_2, b_1-b_2\} \oplus \Q_-\{a_1 + a_2, b_1+b_2\}
\end{equation*}
where $\Q$ denotes the trivial representation and $\Q_-$ denotes the sign representation.  Observe that the Reidemeister pairing on $H_1(S;\Q)_{\Q_-}$ is simply $2\hat{i}$, and hence with respect to the above basis, $\G_{\Q_-} = \SL_2(\Q)$ and $\G_{\Q_-}(\Z) = \SL(2,\Z)$.  With respect to this basis, the map $\Mod(S)^{C_2} \rightarrow \G_{\Q_-}$ maps
\begin{align*}
T_{a_1}T_{a_2} &\mapsto 
\begin{pmatrix}
1 & -1 \\ 0 & 1
\end{pmatrix} \\
T_{b_1}T_{b_2} &\mapsto
\begin{pmatrix}
1 & 0 \\ 1 & 1
\end{pmatrix}.
\end{align*}
Thus, we generate $\SL_2(\Z)$. In particular, $\Im(\Phi_{\Q_-})$ is arithmetic.

\para{Case 3.b}
\begin{figure}[h]
\center
\includegraphics[scale=.2]{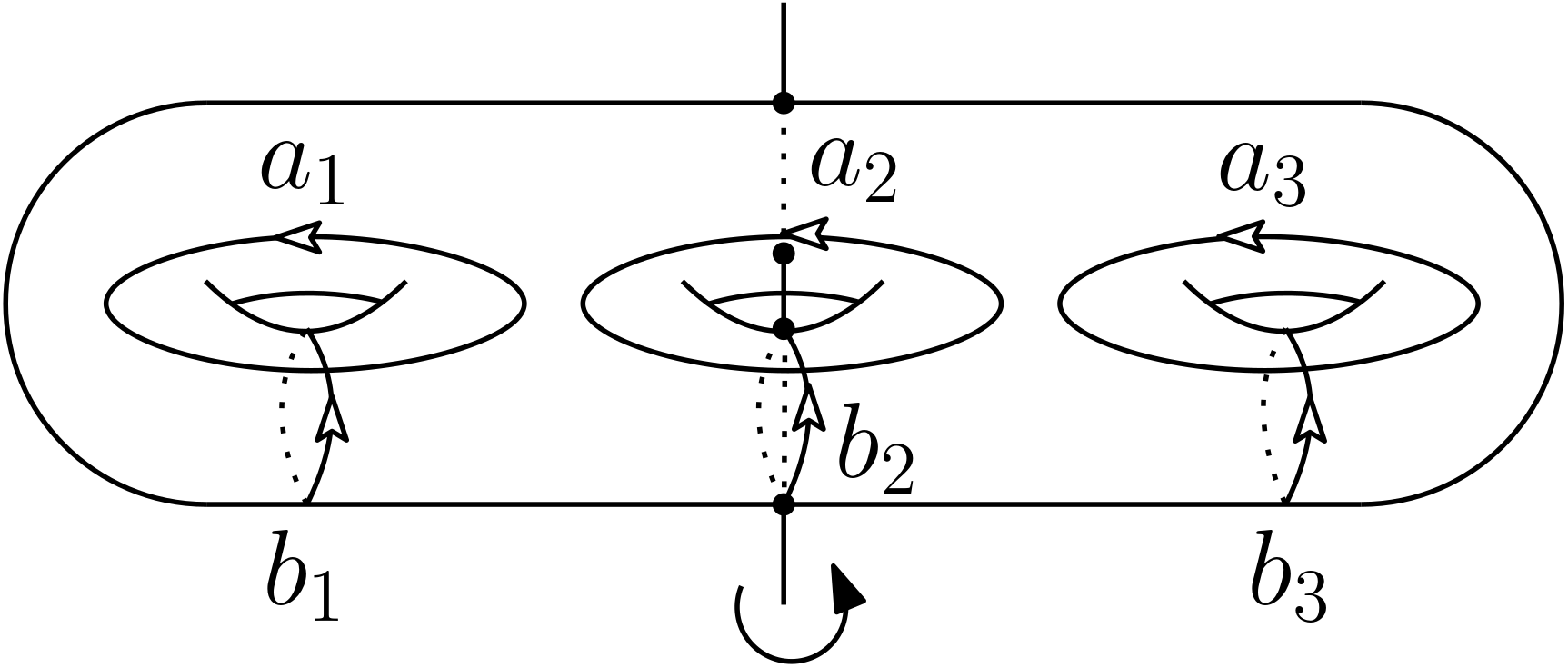}
\caption{The action of $C_2$ in Case 3.b.}
\label{fig:3-b}
\end{figure}

In this case, $G = C_2$ and $S$ is a genus 3 surface.  The action of $G$ on $S$ is the order 2 rotation pictured in Figure \ref{fig:3-b}.  Let $a_i$, $b_i$ be the standard basis of $H_1(S;\Q)$.  We can see directly that 
\begin{equation*}
H_1(S;\Q) = \Q\{a_1 - a_3, b_1 - b_3\} \oplus \Q_-\{a_1 + a_3, b_1 + b_3, a_2, b_2\}
\end{equation*}
where $\Q$ denotes the trivial representation and $\Q_-$ denotes the sign representation.
Let $A_1 = a_1 + a_3$, $B_1 = b_1 + b_3$, $A_2 = a_2$, $B_2 = 2b_2$, and let $W = \Z\{A_1,B_1,A_2,B_2\}$.  Note that $\G_{\Q_-}(\Z)$ is commensurable to $\Aut_\Z(W, \eta)$.  Again the Reidemeister pairing on $H_1(S;\Q)_{\Q_-}$ is $2\hat{i}$.  If we let $\omega$ be the form on $W$ defined in our basis by
\begin{equation*}
\omega = \begin{pmatrix}
0 & 1 & 0 & 0 \\ -1 & 0 & 0 & 0 \\ 0 & 0 & 0 & 1 \\ 0 & 0 & -1& 0
\end{pmatrix},
\end{equation*}
then $\hat{i} = 2\omega$ and $\eta = 4\omega$ on $W$.  So, $\Aut(W, \eta) = \Sp(4,\Z)$.  Given $v \in W$, let $R_v$ denote the matrix of the transvection
\begin{equation*}
x \mapsto x + \omega(x,v)v
\end{equation*}
in our basis of $W$.
The level 2 principal congruence subgroup of $\Sp(4,\Z)$ is generated by the following set (see e.g.\ \cite{Tit}):
\begin{equation*}
\{R_{A_1}^2, R_{A_2}^2, R_{B_1}^2, R_{B_2}^2, R_{A_1+A_2}^2, R_{B_1+B_2}^2, R_{A_1+B_1}^2, R_{A_1+B_2}^2, R_{A_2+B_1}^2, R_{A_2+B_2}^2\}.
\end{equation*}
So, it suffices to show that the image of our representation contains all of the above elements.

First, note that
\begin{align*}
T_{a_1}T_{a_3} &\mapsto R_{A_1} \\
T_{a_2} & \mapsto R_{A_2}^2 \\
T_{b_1}T_{b_3} &\mapsto R_{B_1} \\
T_{b_2} &\mapsto R_{B_2}^2
\end{align*}
Observe that $R_{A_1}$ and $R_{B_1}$ together generate $\SL_2(\Z) \times \{I_2\}$, and $R_{A_2}^2$ and $R_{B_2}^2$ generate $\{I_2\} \times \left\la \left( \begin{smallmatrix} 1 & 2 \\ 0 & 1 \end{smallmatrix} \right), \left( \begin{smallmatrix} 1 & 0 \\ 2 & 1 \end{smallmatrix} \right) \right\ra$.  Moreover, the generator of $C_2$ gives $-I_4$ in the image, hence  $-I_4(-I_2 \oplus I_2) = I_2 \oplus -I_2$ is also the image, where $\oplus$ denotes a block diagonal matrix.  Thus, the image contains $\SL_2(\Z) \times \Gamma(2)$, where $\Gamma(2)$ is the level 2 principal congruence subgroup of $\SL_2(\Z)$.  This gets us $R_{A_1+B_1}^2$ and $R_{A_2+B_2}^2$.

So, it remains to get the ``mixed" transvections $R_{A_1+A_2}^2$, $R_{A_1+B_2}^2$, $R_{B_1+A_2}^2$, and $R_{B_1+B_2}^2$.  For a homology class $\gamma \in H_1(S;\Z)$, let $T_\gamma$ denote any Dehn twist around a simple closed curve that represents $\gamma$.  Now, note that

\begin{align*}
T_{a_1+b_2}T_{a_3+b_2} &\mapsto 
\begin{pmatrix}
1 & -1 & 1 & 0 \\
0 & 1 & 0 & 0 \\
0 & 0 & 1 & 0 \\
0 & -1 & 1 & 1
\end{pmatrix} = R_{A_1+B_2}, \\
T_{b_1+b_2}T_{b_3+b_2} &\mapsto
\begin{pmatrix}
1 & 0 & 0 & 0 \\
1 & 1 & 1 & 0 \\
0 & 0 & 1 & 0 \\
1 & 0 & 1 & 1
\end{pmatrix} = R_{B_1+B_2},
\end{align*}
and
\begin{align*}
(T_{a_1+a_2}T_{a_3+a_2})(T_{a_1}T_{a_3})(T_{a_2}^{-1}) &\mapsto
\begin{pmatrix}
1 & -1 & 0 & -2 \\
0 & 1 & 0 & 0 \\
0 & -2 & 1 & -4 \\
0 & 0 & 0 & 1 \\
\end{pmatrix}
\begin{pmatrix}
1 & -1 & 0 & 0 \\ 0 & 1 & 0 & 0 \\ 0 & 0 & 1 & 0 \\ 0 & 0 & 0 & 1
\end{pmatrix}
\begin{pmatrix}
1 & 0 & 0 & 0 \\ 0 & 1 & 0 & 0 \\ 0 & 0 & 1 & 2 \\ 0 & 0 & 0 & 1 
\end{pmatrix} \\
&=
\begin{pmatrix}
1 & -2 & 0 & -2 \\
0 & 1 & 0 & 0 \\
0 & -2 & 1 & -2 \\
0 & 0 & 0 & 1
\end{pmatrix}
= R_{A_1+A_2}^2, \\
(T_{b_1+a_2}T_{b_3+a_2})(T_{b_1}T_{b_3})(T_{a_2}^{-1}) &\mapsto
\begin{pmatrix}
1 & 0 & 0 & 0 \\
1 & 1 & 0 & -2 \\
2 & 0 & 1 & -4 \\
0 & 0 & 0 & 1
\end{pmatrix}
\begin{pmatrix}
1 & 0 & 0 & 0 \\ 1 & 1 & 0 & 0 \\ 0 & 0 & 1 & 0 \\ 0 & 0 & 0 & 1
\end{pmatrix}
\begin{pmatrix}
1 & 0 & 0 & 0 \\ 0 & 1 & 0 & 0 \\ 0 & 0 & 1 & 2 \\ 0 & 0 & 0 & 1 
\end{pmatrix} \\
&= 
\begin{pmatrix}
1 & 0 & 0 & 0 \\
2 & 1 & 0 & -2 \\
2 & 0 & 1 & -2 \\
0 & 0 & 0 & 1 
\end{pmatrix}
= R_{B_1+A_2}^2.
\end{align*}
So, it suffices to check that the multitwists $T_{a_1+b_2}T_{a_3+b_2}$, $T_{b_1+b_2}T_{b_3+b_2}$, $T_{a_1+a_2}T_{a_3+a_2}$, and $T_{b_1+a_2}T_{b_3+a_2}$ lie in $\Mod(S)^{C_2}$, which is to say that these homology classes are represented by multicurves which are invariant under $C_2$.  We can simply draw these curves; see Figure \ref{fig:3-bcurves}.
\begin{figure}[h]
\center
\includegraphics[scale=.25]{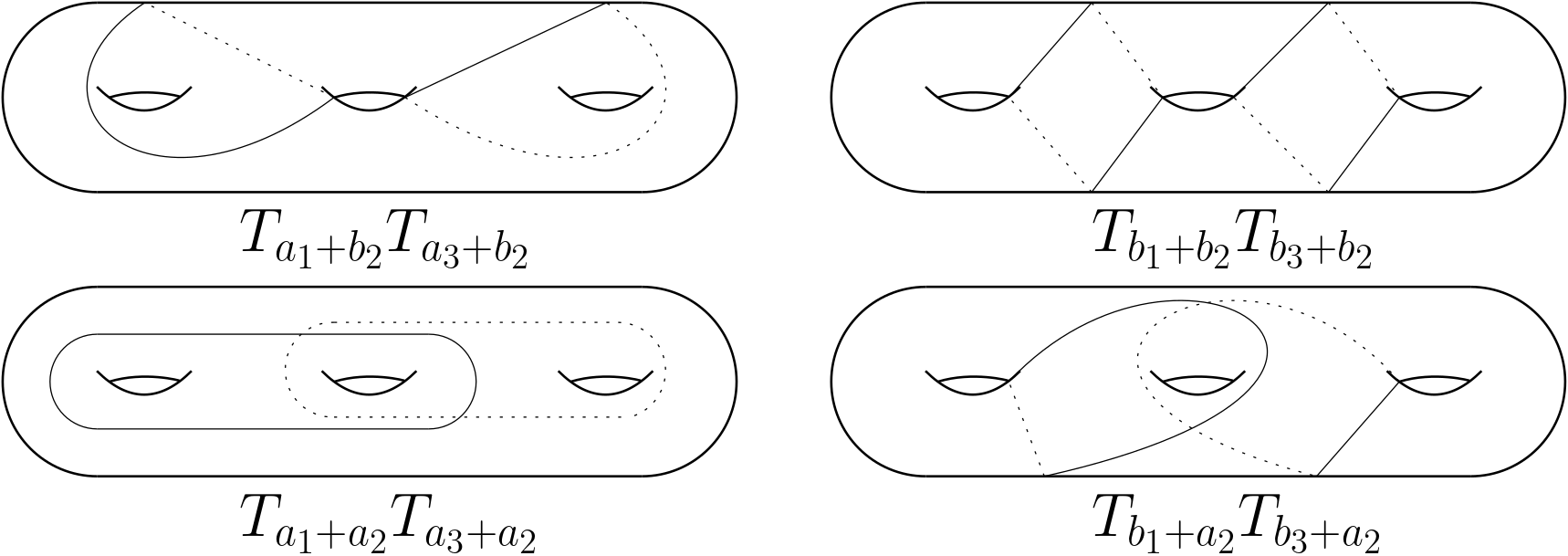}
\caption{Verifying that the desired twists lie in $\Mod(S)^{C_2}$.}
\label{fig:3-bcurves}
\end{figure}

\para{Case 3.e}
\begin{figure}[h]
\center
\includegraphics[scale=.25]{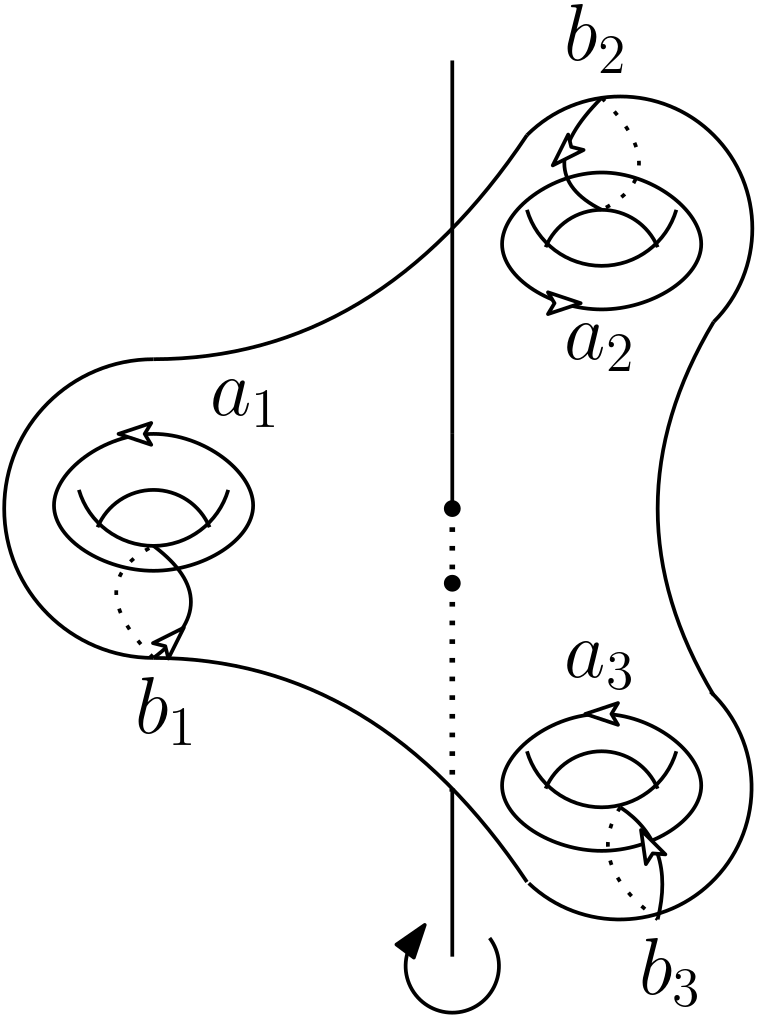}
\caption{The action of $C_3$ in Case 3.e.}
\label{fig:3-e}
\end{figure}

In this case, $G = C_3$ and $S$ has genus 3.  The action of $G$ on $S$ is the order 3 rotation pictured in Figure \ref{fig:3-e}.  Let $a_i$, $b_i$ be the standard basis of $H_1(S;\Q)$.  We see that
\begin{equation*}
H_1(S;\Q) = \Q\{a_1+a_2+a_3, b_1+b_2+b_3\} \oplus \Q(\zeta_3)\{a_1-a_2, b_1-b_2\}
\end{equation*}
where $\Q$ is the trivial representation and $\zeta$ acts by the generator of $C_3$ (in particular, $1+c+c^2$ acts by $0$ on $a_1-a_2$ and $b_1-b_2$).  With respect to this basis, we see that the Reidemeister pairing on $H_1(S;\Q)_{\Q(\zeta_3)}$ is $3h$, where $h$ is the skew-Hermitian form
\begin{equation*}
h =
\begin{pmatrix}
0 & 1 \\ -1 & 0
\end{pmatrix}.
\end{equation*}
In this case the corresponding simple factor is the field $\Q(\zeta_3)$, the involution $\tau$ is complex conjugation, and the field fixed by $\tau$ is $K = \Q(\zeta_3+\zeta_3^{-1}) = \Q$.  So, $\G_{\Q(\zeta_3)}(\mathcal{O}_K) = \G_{\Q(\zeta_3)}(\Z) = U(h;\Z[\zeta_3])$.  Since the determinant of every $B \in U(h;\Z[\zeta_3])$ is an integer on the unit circle, there are only finitely many possible determinants, and so $\SU(h;\Z) = \SL_2(\Z)$ has finite index in $\G_{\Q(\zeta_3)}(\Z)$.

With respect to the above basis, the map $\Mod(S)^{C_3} \rightarrow \G_{\Q(\zeta_3)}$ maps
\begin{align*}
T_{a_1}T_{a_2}T_{a_3} &\mapsto 
\begin{pmatrix}
1 & -1 \\ 0 & 1
\end{pmatrix} \\
T_{b_1}T_{b_2}T_{b_3} &\mapsto
\begin{pmatrix}
1 & 0 \\ 1 & 1
\end{pmatrix}.
\end{align*}
Thus, we generate the entire group $\SL_2(\Z)$. In particular, $\Im(\Phi_{\Q(\zeta_3)})$ is arithmetic.

\para{Case 3.s.1}
In this case $G = Q_8$.  Recall from Section 3.2 that for the unique faithful $V \in \Irr(G)$, $\G_V = \{B \in D^\times \mid BFB^* = F\}$ where $F \in M_2(\Q(i))$ and
\begin{equation*}
D = \left\{
\begin{pmatrix}
\alpha & \beta \\ -\overline{\beta} & \overline{\alpha}
\end{pmatrix}
\mid \alpha,\beta \in \Q(i)\right\}.
\end{equation*}
Note that $\G_V^1 := \{B \in \G_V \mid \det(B) = 1\}$ is precisely the group $\SU(2;\Q(i))$.  Since $\SU(2)$ is compact, every discrete subgroup of $\G_V^1$ is finite.  Thus, this case is automatically arithmetic if we can show that $\G_V^1(\Z)$ has finite index in $\G_V(\Z)$ (this is actually true in more general cases, see \cite[Proposition~3.9]{GLLM}).  This follows as in the previous case.  Namely, observe first that for every $B \in \G_V$, $\v \det(B) \v^2 \det(F) = \det(F)$ and hence $\v \det(B) \v = 1$.  On the other hand, for every $B \in \G_V(\Z)$, $\det(B) \in \Z[i]$.  It follows that $\det(B) \in \{\pm 1, \pm i\}$ for every $B \in \G_V(\Z)$, and hence $\G_V^1(\Z)$ has finite index as desired.  

\para{Case 3.ad.2}
In this case $G = C_4 \circ D_4$.  Recall from Section 3.2 that for the unique faithful $V \in \Irr(G)$, $\G_V \cong U(1; \Q(i))$.  Since the group $U(1)$ is compact, this case is automatically arithmetic.

%-------------------------------------------------------------------------------------------------------------------------------------------------------------------

\section{Lifting twists}

In the 12 cases that remain (2.k.2, 2.n, 2.s, 3.i.1, 3.m, 3.n, 3.q.2, 3.r.2, 3.s.2, 3.y, 3.z, and 3.al), the group $\G_V$ is isomorphic to $\SL_2(\Q)$ or $\U(1,1;\Q[i])$.  Since $\SU(1,1;\Q[i])$ is isomorphic to $\SL_2(\Q)$, we can always map $\G_V(\Z)$ to $\SL_2(\Z)$ (up to commensurability) in the remaining cases.  Given a set of matrices in $\SL_2(\Z)$, one can determine if they generate a finite index subgroup by computing the intersection with a finite index free subgroup and applying an algorithm for determining whether a subgroup of a free group has finite index (see e.g.\ \cite{Kap}).  Thus, if we can explicitly compute elements of $\Im(\Phi_V)$, it is simple to check if we have found enough to generate an arithmetic subgroup.

To find elements of $\Mod(S)^G$, we can simply lift elements of $\Mod(S^\circ/G)$.  However, for an arbitrary mapping class of $S^\circ/G$, it is not easy in general to compute the action of the lift on $H_1(S;\Q)$.  The simplest elements of $\Mod(S)^G$ are lifts of powers of Dehn twists, which we call \emph{partial rotations};  these elements have a simple geometric description that allows us to compute their action on $H_1(S;\Q)$.  While there is not an obvious reason to expect it a priori, it turns out that partial rotations are enough to generate an arithmetic subgroup in all 12 remaining cases.

To compute the action of a partial rotation, we equip $S$ with a cell structure for which $G$ acts by cellular maps.  However, the partial rotation will not be a cellular map.  Instead, we directly define a map on the cellular chain group $\psi:C_1(S) \rightarrow C_1(S)$ (this map is \emph{not} induced by a map $S \rightarrow S$), and we observe that $\psi$ induces a map on homology that agrees with the action of the partial rotation.

In Section 4.1, we build the cell structure on $S$ and explain how to compute the action of a partial rotation on $H_1(S;\Q)$.  In Section 4.2, we explain in detail how to determine whether a finitely generated subgroup of $\SL_2(\Z)$ has finite index.  In Section 4.3, we present a list of partial rotations that generate a finite index subgroup of $\G_V(\Z)$ for each of the remaining 12 cases.  We carry out these computations in SageMath; the code for these computations is available at \cite{low-genus-actions}.

\subsection{Computing the action of lifted twists}

\para{An equivariant cell structure}
Fix a cover $p:S \rightarrow S/G$ with monodromy $\varphi:\pi_1(S^\circ/G) \rightarrow G$, and suppose $S/G$ has genus $h$ and $n > 0$ branch points $x_1, \ldots, x_n$.  Let $N = 2h+n - 1$, so $\pi_1(S^\circ/G)$ is free of rank $N$.  

Our first step is to build a $2N$-gon $P$ with oriented edges $e_1, \ldots, e_{2N}$ and orientation-preserving edge pairings so that if we let $\overline{P}$ be the quotient of $P$ by the edge pairings, then
\begin{enumerate}
\item $\overline{P}$ is homeomorphic to $S/G$, and
\item the vertices of $\overline{P}$ map to the branch points of $S/G$.
\end{enumerate}
We do this is as follows.  First, for $1 \leq i \leq h$, choose simple loops $c_i$ and $d_i$ based at $x_n$ representing the standard generators of $\pi_1(S/G)$.  Next, for $1 \leq i \leq n-1$, choose arcs $\alpha_i$ from $x_n$ to $x_i$.  Assuming our loops and arcs intersect only at $x_n$, we can cut along them to obtain the desired polygon $P$; see Figure \ref{fig:polygonP}.

\begin{figure}[h]
\center
\includegraphics[scale=.25]{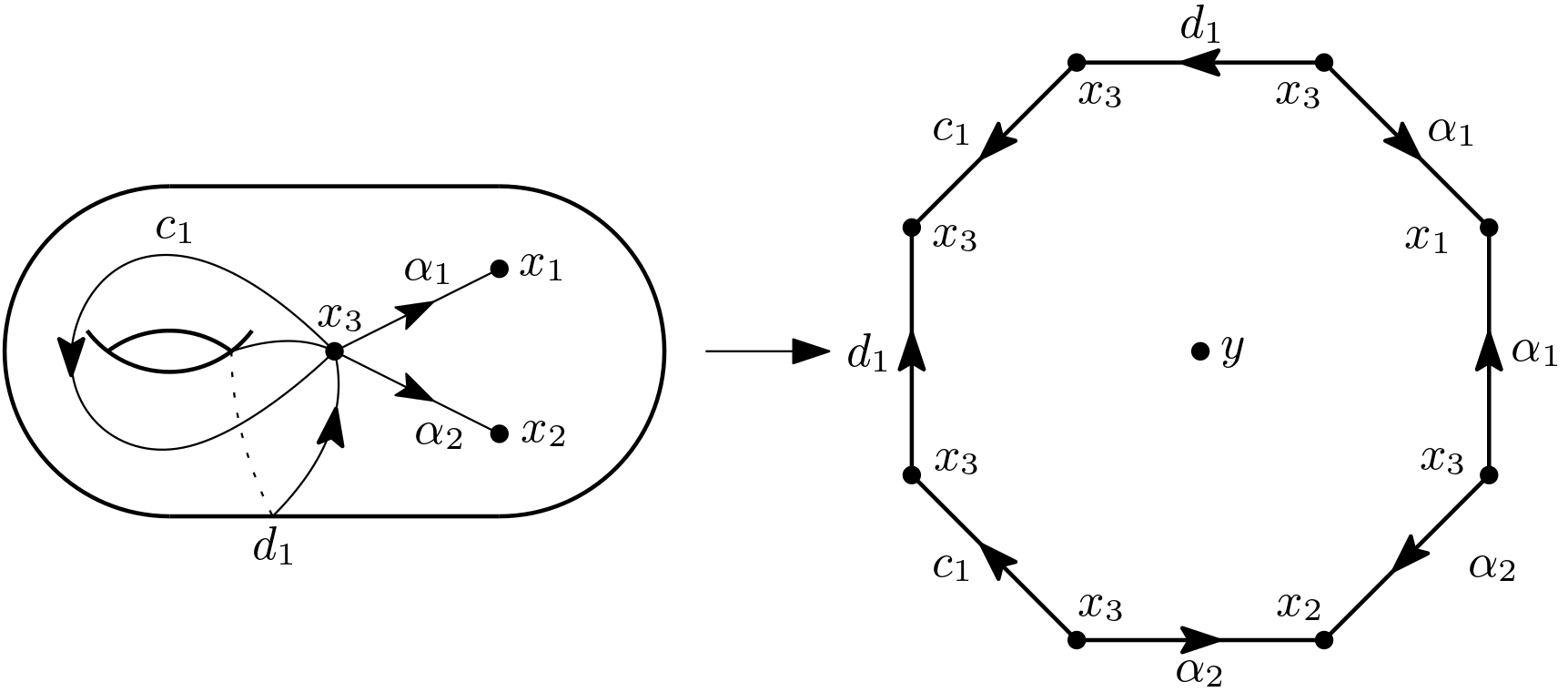}
\caption{Building the polygon $P$, with $h=1$ and $n=3$.}
\label{fig:polygonP}
\end{figure}

The edge pairings partition the edges into pairs $E_1, \ldots, E_N$.  For $1 \leq i \leq N$, write $E_i = \{e_{i_1}, e_{i_2}\}$ where $i_1 < i_2$, and set $E_{i}^+ = e_{i_1}$ and $E_i^- = e_{i_2}$.  We fix the center point $y \in P$ as the base point of $S^\circ/G$.  For each edge pair $E_i$, we build a loop $\gamma_i$ on $S^\circ/G$ based at $y$ by choosing an arc from $y$ to edge $E_i^+$, and then an arc out of edge $E_i^-$ back to $y$.  Together, the loops $\gamma_i$ for $1 \leq i \leq N$ form a free basis of $\pi_1(S^\circ/G)$.

Now, we construct the cell structure on $S$.  Take $\v G \v$ copies of the polygon $P$, labeled $P_u$ for $u \in G$; we call these the \emph{sheets} of $S$.  We let $e_j^u$ denote the copy of edge $e_j$ on the sheet $P_u$.  Our goal is to glue the edges of the sheets $P_u$ so that the permutation action $v \cdot P_u = P_{vu}$ is precisely the action of $G$ on $S$.  Let $y_u$ be the copy of $y$ on $P_u$; we fix $y_{id}$ as the base point on $S$.  To get the desired gluing, we need the lift of $\gamma_i$ to $y_u$ to end at $y_{u\varphi(\gamma_i)}$.  To achieve this, we simply glue $(E_i^+)^u$ to $(E_i^-)^{u\varphi(\gamma_{E_i})}$ for each edge pair $E_i$.  See Figure \ref{fig:exampleCover} for an illustration in Case 2.k.2.

\begin{figure}[h]
\center
\includegraphics[scale=.25]{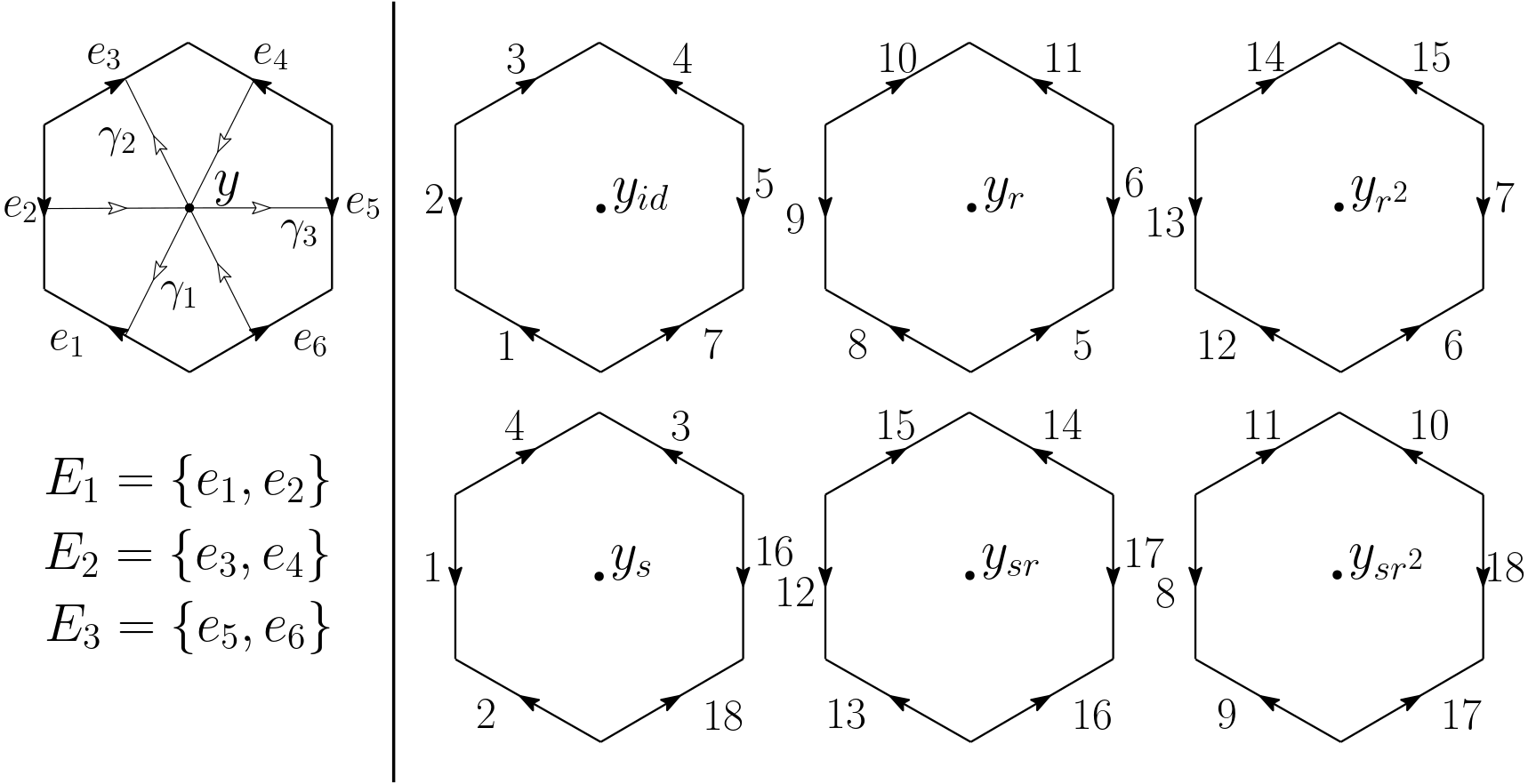}
\caption{Case 2.k.2. Here $G= D_3$, $g=2$, $h=0$, $n= 4$, $\varphi(\gamma_1) = s$, $\varphi(\gamma_2) = s$, and $\varphi(\gamma_3) = r$.  The edge identifications in the cover are given by numbers.}
\label{fig:exampleCover}
\end{figure}

Thus, we have a cell structure on $S$ so that $G$ acts by cellular maps.  In particular, the action of $G$ on $H_1(S;\Q)$ can be computed directly from the permutation action on the edges.

\para{Partial rotations}
Fix an oriented simple closed curve $\delta$ on $\overline{P} = S^\circ/G$.  We assume the following about $\delta$:
\begin{itemize}
\item $\delta$ is transverse to the edges of $P$,
\item $\delta$ does not pass through any vertices of $P$,
\item the base point $y \in P$ lies to the left of $\delta$ (in particular, $\delta$ does not pass through $y$).
\end{itemize}
We also fix a simple arc in $\Int(P) \backslash \delta$ from $y$ to a point $y' \in \delta$; in this way we can view $\delta$ as a well-defined element of $\pi_1(S^\circ/G)$.  If we let $y'_{id}$ denote the copy of $y'$ on sheet $P_{id}$ and let $\widetilde{\delta}_{id}$ be the lift of $\delta$ to $y'_{id}$, then $\varphi(\delta)$ is the deck transformation taking $y'_{id}$ to the endpoint of $\widetilde{\delta}_{id}$.  

Let $T_\delta$ denote the left Dehn twist around $\delta$; we may assume that $T_\delta$ fixes the base point $y \in P$.  Assume that $T_\delta$ lifts to a homeomorphism $\widetilde{T}_\delta$ of $S$ which commutes with $G$; recall that this occurs if and only if $T_\delta$ fixes the monodromy homomorphism $\varphi:\pi_1(S^\circ/G) \rightarrow G$.  In particular we assume that $\widetilde{T}_\delta$ is the lift fixing $y_{id}$.  The lift $\widetilde{T}_\delta$ is a \emph{partial rotation}.  This is a homeomorphism that looks like a fractional Dehn twist around each component of $p^{-1}(\delta)$.  We make this precise as follows.

First, observe that $G$ acts on the set of components of $p^{-1}(\delta)$, and the subgroup $\la \varphi(\delta) \ra$ is the stabilizer of the component passing through $y'_{id}$, which we denote $C_{id}$.  Thus we get an isomorphism of $G$-sets
\begin{equation*}
\{\text{components of $p^{-1}(\delta)$}\} \cong G/\la \varphi(\delta) \ra,
\end{equation*}
where $C_{id}$ corresponds to the coset $\la \varphi(\delta) \ra$.  In particular, if we let $m$ denote the order of $\la \varphi(\delta) \ra$ in $G$, then $p^{-1}(\delta)$ has $\v G \v/m$ components, each containing $m$ lifts of $\delta$.  

Now, we can precisely examine the action of $\widetilde{T}_\delta$. Let $U \subseteq S$ be the preimage of the support of $T_\delta$ (so $U$ is an annular neighborhood of each component of $p^{-1}(\delta)$), and let $S^\times = S \backslash U$.  On $U$, the partial rotation $\widetilde{T}_\delta$ must act by a $(1/m)$th rotation on each component.  On $S^\times$, $\widetilde{T}_\delta$ must act by a deck transformation on each component.  To understand which deck transformations it acts by, we first look at the action around $C_{id}$.  Since $\widetilde{T}_\delta$ fixes $y_{id}$ and $y_{id}$ lies to the left of $C_{id}$, the lift $\widetilde{T}_\delta$ in fact fixes the entire component of $S^\times$ to the left of $C_{id}$.  It then must act on the component to the right of $C_{id}$ by $\varphi(\delta)$.  More generally, for every component $C$ of $p^{-1}(\delta)$, if we let $u\la \varphi(\delta)\ra$ be the corresponding coset, then $\widetilde{T}_\delta$ fixes the component of $S^\times$ to the left of $C$ and acts on the component of $S^\times$ to the right of $C$ by $u\varphi(\delta)u^{-1}$.

\para{The action on homology}
Computing the action of the partial rotation $\widetilde{T}_\delta$ on $H_1(S;\Q)$ is not immediate as it is not a cellular map; it acts on the set of vertices but does not act on the set of edges.  Instead, we define a map on the cellular chain group $\psi:C_1(S) \rightarrow C_1(S)$ which induces a map on $H_1(S;\Q)$ that agrees with the action of $\widetilde{T}_\delta$.  To define $\psi$, we examine the arc $\widetilde{T}_\delta(e)$ for each edge $e$ on $S$, and we explain a procedure, independent of $e$, to homotope $\widetilde{T}_\delta(e)$ rel endpoints into the 1-skeleton of $S$.  This gives us a consistent way, for each edge $e$, to write $\widetilde{T}_\delta(e)$ as a signed sum of edges; we define $\psi(e)$ to be this signed sum.  Note that $\psi$ is not induced by a map $S \rightarrow S$; in particular, we are not simply homotoping $\widetilde{T}_\delta$ to a cellular map.  However, $\psi$ still induces a map on homology, as we check in Lemma \ref{lem:mapOnH1} below.

We can understand the arc $\widetilde{T}_\delta(e)$ informally using our geometric description of $\widetilde{T}_\delta$.  To draw $\widetilde{T}_\delta(e)$, we start at the initial vertex of $e$ and head towards the terminal vertex.  Every time we run into $p^{-1}(\delta)$, we follow it to the left for one lift of $\delta$, and then continue on a copy of $e$ on a different sheet.  We continue this process until we reach the terminal vertex of some copy of $e$.  If $\widetilde{T}_\delta$ fixes the component of $S^\times$ containing the intial vertex of $e$, then we are done.  If it acts on this component by $u \in G$, then the arc we have currently drawn is $u^{-1}\widetilde{T}_\delta(e)$, so we finish by applying the deck transformation $u$ to our arc.

Now, we can adapt this geometric description to define $\psi$.  Fix an edge $e_j^u$ on $S$.  Let $\delta \cap e_j = \{z_1, \ldots, z_k\}$, ordered by distance from the intial vertex of $e_j$ (it is possible that this set is empty).  Then for each $v \in G$, we can write
\begin{equation*}
p^{-1}(\delta) \cap e_j^v = \{z_1^v, \ldots, z_k^v\}.
\end{equation*}
First, we let $\widetilde{\delta}_1$ be the lift of either $\delta$ to $\delta^{-1}$ to $z_1^u$; we choose $\delta$ if $\hat{i}(e_j,\delta)$ is positive at $z_1$, and choose $\delta^{-1}$ otherwise (i.e.\ we want our lift to follow $p^{-1}(\delta)$ ``to the left").  Inductively, for $1 \leq i < k$, let $u_i \in G$ be the element such that $\widetilde{\delta}_i$ ends at $z_i^{u_i} \in e_j^{u_i}$, and we let $\widetilde{\delta}_{i+1}$ be the lift of either $\delta$ or $\delta^{-1}$ to $z_{i+1}^{u_i}$ depending on whether $\hat{i}(e_j, \delta)$ is positive or negative at $z_{i+1}$.  The arc $\widetilde{T}_\delta(e_j^u)$ is then obtained by joining the intial vertex of $e_j^u$ to the initial vertex of $\widetilde{\delta}_1$, joining the terminal vertex of $\widetilde{\delta}_i$ to the initial vertex of $\widetilde{\delta}_{i+1}$ for $1 \leq i < k$, and joining the terminal vertex of $\widetilde{\delta}_k$ to the terminal vertex of $e_j^{u_k}$.  See Figure \ref{fig:exampleLifts} for an illustration of $\widetilde{T}^2_\delta(e_j^u)$ in Case 2.k.2 (for a $k$th power of a twist, we take $\widetilde{\delta}_i$ to be a lift of $\delta^{\pm k}$).

\begin{figure}[h]
\center
\includegraphics[scale=.25]{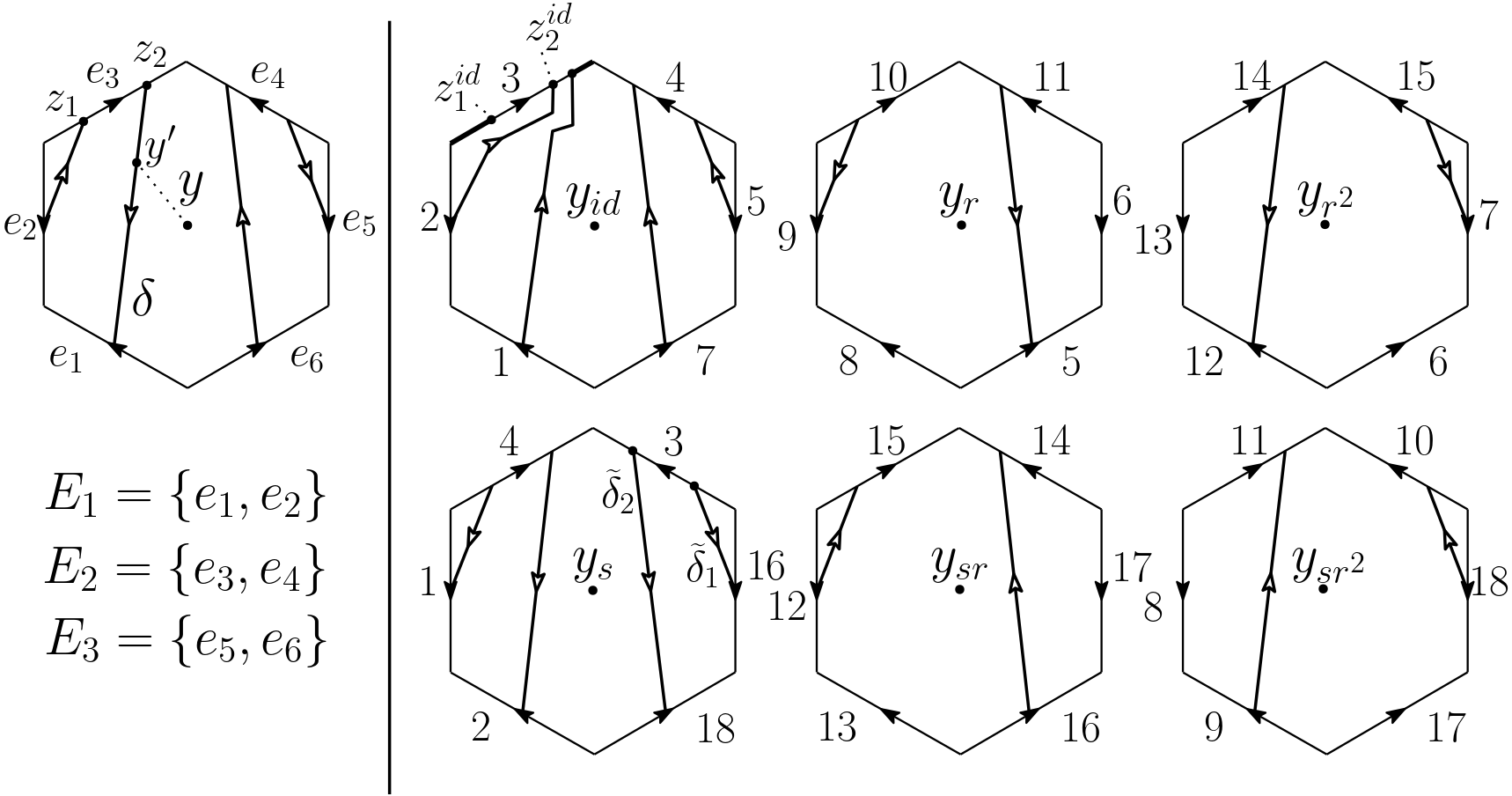}
\caption{Example of a curve $\delta$ and the arc $\widetilde{T}^2_\delta(e_3^{id})$ in Case 2.k.2.  Here $\widetilde{\delta}_1$ is a lift of $\delta^2$, and $\widetilde{\delta}_2$ is a lift of $\delta^{-2}$.  Both lifts happen to end on the same sheet where they started.  Note that some overlap occurs on edge $e_3^{id}$; in the illustration we slightly perturb the arcs for clarity.}
\label{fig:exampleLifts}
\end{figure}

Now, we homotope each $\widetilde{\delta}_i$ into the 1-skeleton of $S$ in two stages:
\begin{enumerate}
\item For each edge $e$ that $\widetilde{\delta}_i$ passes through, we push $\widetilde{\delta}_i$ down $e$ so that it passes through the initial vertex of $e$.  Now on each sheet, $\widetilde{\delta}_i$ is union of arcs whose endpoints are vertices.
\item On each sheet, we push each arc of $\widetilde{\delta}_i$ onto the boundary so that it traverses the boundary clockwise.
\end{enumerate}
These homotoped arcs, together with the full edge $e_j^{u_{k+1}}$, are homotopic rel endpoints to the arc $\widetilde{T}_\delta(e_j^u)$. We illustrate these steps in Figure \ref{fig:exampleHomotope}.

\begin{figure}[h!]
\center
\includegraphics[scale=.48]{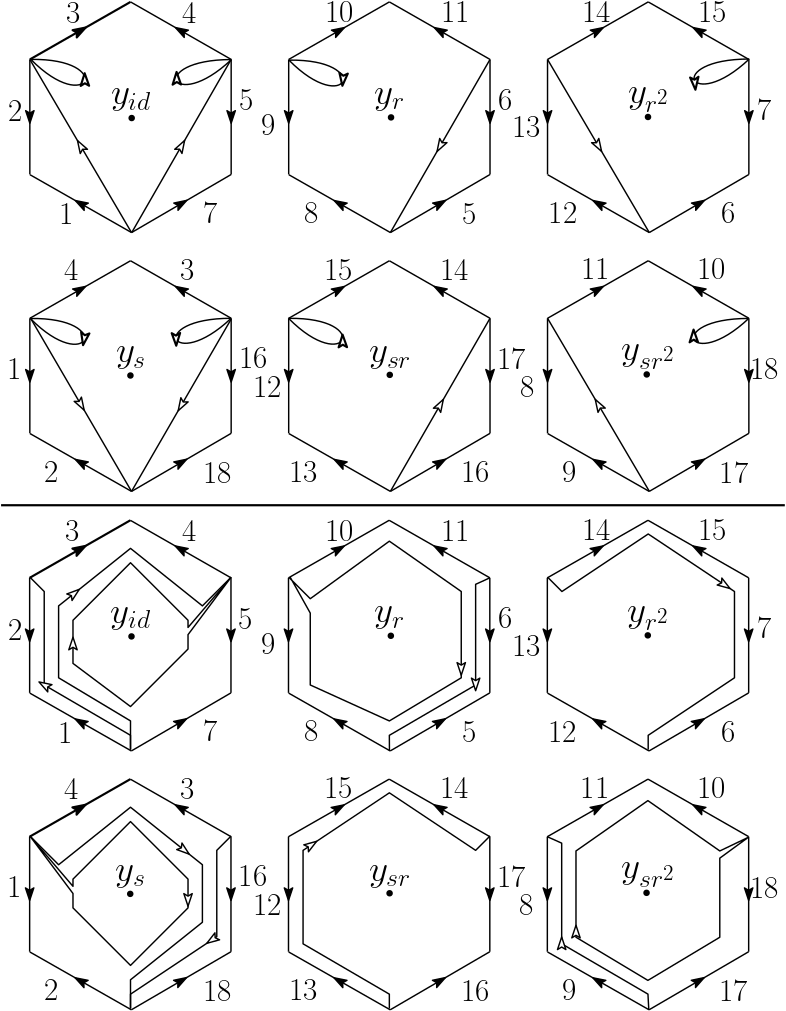}
\caption{Implementing steps 1 and 2 of the homotopy in Case 2.k.2.  After the second step, the arcs lie on the 1-skeleton, but we draw them parallel to the 1-skeleton for clarity.}
\label{fig:exampleHomotope}
\end{figure}

This procedure gives us a well-defined way to represent each $\widetilde{\delta}_i$ as a signed sum of edges, and hence as an element of $C_1(S)$.  In particular, if we let $b_{i\ell}^v$ denote the number of times that the homotoped version of $\widetilde{\delta}_i$ traverses the edge $e_\ell^v$, counted with sign depending on whether the orientations agree, then $\widetilde{\delta}_i$ is represented by the element
\begin{equation*}
\sum_{v \in G} \sum_{\ell=1}^{2N} b_{i\ell}^v e_\ell^v \in C_1(S).
\end{equation*} 
Finally, recall that $\widetilde{T}_\delta$ acts on the component of $S^\times$ containing the initial vertex of $e_j^u$ by some deck transformation $w \in G$.  As explained above, $w$ will either be the identity or a conjugate of $\varphi(\delta)$, depending on whether the initial vertex of $e_j^u$ lies to the left or right of $p^{-1}(\delta)$.  In either case, we define
\begin{equation*}
\psi(e_j^u) = w(\widetilde{\delta}_1 + \cdots + \widetilde{\delta}_k + e_j^{u_{k+1}}).
\end{equation*}

Now, we can verify that $\psi$ gives us the desired map on homology.

\begin{lemma}\label{lem:mapOnH1}
The map $\psi:C_1(S) \rightarrow C_1(S)$ induces a map on $H_1(S;\Q)$ that agrees with the map induced by $\widetilde{T}_\delta$.
\end{lemma}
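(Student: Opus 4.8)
The plan is to show that $\psi$ is chain-homotopic, after passing to cycles, to a genuine cellular approximation of $\widetilde T_\delta$, so that the two induce the same map on $H_1$. The key point is that the arc $\widetilde T_\delta(e_j^u)$ we constructed is, by design, homotopic rel endpoints to the actual image $\widetilde T_\delta(e_j^u)$ of the edge under the homeomorphism: the homotopies in steps (1) and (2) only push arcs across $2$-cells and along $1$-cells, hence are homotopies rel endpoints in $S$. Thus $\psi(e_j^u)$ is a cellular $1$-chain representing the same relative homology class in $H_1(S, S^{(0)};\Q)$ as the path $\widetilde T_\delta(e_j^u)$. I would make this precise by first checking that $\psi$ really is a chain map, i.e.\ that $\partial\psi = \psi\partial$ on $C_1(S)$. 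This follows because $\partial\psi(e_j^u)$ equals the (signed) difference of the endpoints of the concatenated arc $w(\widetilde\delta_1 + \cdots + \widetilde\delta_k + e_j^{u_{k+1}})$, which are exactly $\widetilde T_\delta$ applied to the endpoints of $e_j^u$; since $\widetilde T_\delta$ acts cellularly on $S^{(0)}$, this is precisely $\psi_0(\partial e_j^u)$ where $\psi_0$ is the permutation map on $C_0(S)$ induced by $\widetilde T_\delta$ on vertices. (One must check the deck-transformation bookkeeping: the element $w$ attached to $e_j^u$ is the deck transformation by which $\widetilde T_\delta$ moves the component of $S^\times$ containing the initial vertex, and because $\widetilde T_\delta$ is a homeomorphism, the terminal vertex of the concatenation lands on $\widetilde T_\delta$ of the terminal vertex of $e_j^u$ — this is exactly the geometric description preceding the lemma.)

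Next I would compare $\psi$ with a true cellular approximation. By the cellular approximation theorem, $\widetilde T_\delta$ is homotopic to a cellular map $g: S \to S$, and $g$ may be taken to agree with $\widetilde T_\delta$ on $S^{(0)}$ since $\widetilde T_\delta$ already maps vertices to vertices. The induced chain map $g_\#: C_*(S) \to C_*(S)$ computes $(\widetilde T_\delta)_*$ on $H_*(S;\Q)$. I claim $g_\#$ and $\psi$ agree on $Z_1(S)$, the group of $1$-cycles, up to a boundary — in fact they agree on the nose as maps $C_1(S) \to C_1(S)/B_1(S)$ after one more reduction, but it is cleanest to argue as follows: for each edge $e_j^u$, the chains $g_\#(e_j^u)$ and $\psi(e_j^u)$ have the same boundary (both equal $\psi_0(\partial e_j^u)$), hence their difference is a cycle $z_j^u \in Z_1(S)$. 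So there is a function $c$ on edges with $\psi(e_j^u) = g_\#(e_j^u) + z_j^u$. The content is that on a $1$-cycle $\sum n_{j,u} e_j^u$, the correction term $\sum n_{j,u} z_j^u$ is a boundary. This holds because $g_\#(e_j^u)$ and $\psi(e_j^u)$ are both cellular representatives of the same relative class of the path $\widetilde T_\delta(e_j^u)$ in $H_1(S, S^{(0)};\Q)$ — indeed $g(e_j^u)$ is homotopic rel endpoints to $\widetilde T_\delta(e_j^u)$, and $\psi(e_j^u)$ is too, by the step (1)–(2) homotopy — so $z_j^u$ is null-homologous rel $S^{(0)}$, i.e.\ $z_j^u = \partial(\text{some } 2\text{-chain}) + (\text{a chain supported on } S^{(0)})$; since $z_j^u \in Z_1$, the $S^{(0)}$ part vanishes, giving $z_j^u \in B_1(S)$. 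Actually each individual $z_j^u$ need not be a cycle's-worth of boundary on its own; the clean statement is that $e \mapsto z_e$ factors through a chain map that is chain-null-homotopic, and summing over a cycle kills it in homology. I would phrase it via the relative homology long exact sequence: the map $[\psi] - [g_\#]: H_1(S, S^{(0)};\Q) \to H_1(S, S^{(0)};\Q)$ is zero (both realize the same paths), and restricting to the image of $H_1(S;\Q) \to H_1(S, S^{(0)};\Q)$ — which is injective since $H_1(S^{(0)}) \to H_1(S)$ need not be zero, but $H_0(S^{(0)}) \to H_0(S)$... — gives that $\psi_* = g_{\#*} = (\widetilde T_\delta)_*$ on $H_1(S;\Q)$.

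The main obstacle I anticipate is bookkeeping rather than conceptual: verifying carefully that the concatenated arc $w(\widetilde\delta_1 + \cdots + \widetilde\delta_k + e_j^{u_{k+1}})$ is genuinely homotopic rel endpoints to $\widetilde T_\delta(e_j^u)$ — in particular that the inductive choice of sheets via the elements $u_i$, the left-vs-right side convention for lifting $\delta$ or $\delta^{-1}$, and the final deck transformation $w$ all fit together consistently with the geometric picture of $\widetilde T_\delta$ as "follow $p^{-1}(\delta)$ to the left, then continue on another sheet." This is exactly the geometric description established in the paragraph before the lemma, so the proof should invoke that description and then only need: (i) the homotopies of steps (1) and (2) are rel endpoints, (ii) $\psi$ is a chain map, and (iii) cellular approximation plus the relative homology argument above. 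Steps (i) and (ii) are short; step (iii) is the formal heart and is routine once the relative class of $\widetilde T_\delta(e_j^u)$ is pinned down.
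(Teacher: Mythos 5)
Your proposal is correct, but it takes a somewhat different route from the paper's proof in its second half. The first step is the same: both you and the paper verify that $\del_1 \circ \psi$ equals the vertex permutation induced by $\widetilde{T}_\delta$ composed with $\del_1$, so that $\psi$ preserves $\Ker(\del_1)$. From there the paper argues directly: it checks that $\psi$ preserves $\Im(\del_2)$ by observing that $\Im(\del_2)$ is generated by the face relations $\del_2 P_u$, that $\widetilde{T}_\delta(\del P_u)$ is a null-homotopic closed curve, and that homotoping it into the $1$-skeleton rel vertices produces $\psi(\del_2 P_u)$, which is therefore a boundary; the agreement of the induced map with $(\widetilde{T}_\delta)_*$ is then asserted ``by construction.'' You instead introduce a cellular approximation $g$ of $\widetilde{T}_\delta$ agreeing with it on $S^{(0)}$ and show that $\psi(e) - g_\#(e)$ lies in $\Im(\del_2)$ for every edge $e$, because both chains represent the class of the path $\widetilde{T}_\delta(e)$ in $H_1(S, S^{(0)};\Q)$ and $C_1(S^{(0)}) = 0$. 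This buys you both conclusions at once --- preservation of $B_1$ is automatic since $g_\#$ is a chain map, and the agreement with $(\widetilde{T}_\delta)_*$ on $H_1$ becomes a formal consequence rather than an appeal to the construction --- at the cost of invoking cellular approximation and the identification of cellular chain maps with relative homology classes of paths. One exposition-level slip: your parenthetical justifying injectivity of $H_1(S;\Q) \rightarrow H_1(S, S^{(0)};\Q)$ trails off and conflates $H_1$ with $H_0$; the correct (and immediate) reason is that $H_1(S^{(0)};\Q) = 0$ because $S^{(0)}$ is discrete. But that injectivity is not actually needed in your cleaner formulation, since each correction term $\psi(e) - g_\#(e)$ is individually an absolute boundary. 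Similarly, your hedge that ``each individual $z_j^u$ need not be a boundary on its own'' is unnecessary: it is, precisely because there are no $1$-cells in $S^{(0)}$.
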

\begin{proof}
Let $\del_1:C_1(S) \rightarrow C_0(S)$ and $\del_2:C_2(S) \rightarrow C_1(S)$ be the boundary maps.  First, we check that $\psi$ preserves $\Ker(\del_1)$.  This follows since for every edge $e$, $\del_1(\psi(e))$ is the (signed) endpoints of $\widetilde{T}_\delta(e)$, so $\del_1 \circ \psi = (\widetilde{T}_\delta)_* \circ \del_1$.  Next, we check that $\psi$ preserves $\Im(\del_2)$.  Note that $\Im(\del_2)$ is generated by the face relations $\del_2 P_u = \sum_{1 \leq j \leq 2N} \ep_j e_j^u$ for each $u \in G$, where $\ep_j = +1$ if $e_j^u$ is oriented counterclockwise and $\ep_j = -e_j^u$ if $e_j^u$ is oriented clockwise.  Let $\del P_u$ be the corresponding simple closed curve on $S$. Then $\widetilde{T}_\delta(\del P_u)$ is a null-homotopic simple closed curve, comprised of arcs between the vertices of $S$.  Since we can homotope $\widetilde{T}_\delta(\del P_u)$ rel the vertices into the 1-skeleton to obtain the element $\psi(\del_2 P_u) \in C_1(S)$, we conclude that $\psi$ preserves $\Im(\del_2)$.  Thus, $\psi$ indeed induces a map on $H_1(S;\Q)$.  The fact that the induced map agrees with $\widetilde{T}_\delta$ follows by construction.
\end{proof}

As noted above, we can easily modify the definition of $\psi$ to compute the action of a lift of $T_\delta^k$.  Namely, take $\widetilde{\delta}_i$ to be a lift of $\delta^{\pm k}$, rather than just a lift of $\delta^{\pm 1}$.

\para{Lifting curves combinatorially} Note that the curve $\delta$ can be represented by a sequence of oriented simple arcs $\beta_1, \ldots, \beta_\ell$ joining the sides of $P$.  The definition of $\psi$ depends only on the following data:
\begin{enumerate}
\item the order of the $\beta_i$,
\item the edge that each $\beta_i$ enters,
\item for each edge $e$ of $P$, the order that the $\beta_i$ intersect $e$ from the initial vertex to the terminal vertex.
\end{enumerate}
So we can represent each $\beta_i$ as a pair $(e_i, k_i)$ where $e_i$ is the edge that $\beta_i$ enters and $k_i$ is an integer representing the order of $\beta_i \cap e_i$ in $\delta \cap e_i$, and we can represent $\delta$ as the ordered list $((e_1,k_1), \ldots, (e_\ell, k_\ell))$.

We can compute lifts of $\delta^{\pm 1}$ from this purely combinatorial data as follows.   Suppose $\beta_{i}$ enters the edge $e_{i}$ at the point $z$.  Let $z^u$ be the copy of $z$ on the sheet $P_u$.  Assume first that $\hat{i}(e_i,\beta_i)$ is positive at $z$.  Then to lift $\delta$ to $z_u$, we simply lift each arc in the order $\beta_{i+1}, \beta_{i+2}, \ldots, \beta_\ell, \beta_1, \beta_2, \ldots, \beta_i$.  In particular, we lift $\beta_{i+1}$ to the sheet glued to edge $e_i^u$, lift $\beta_{i+2}$ to the sheet glued to the endpoint of the lift of $\beta_{i+1}$, etc.  If $\hat{i}(e_i, \beta_i)$ is negative at $z$, then we lift $\delta^{-1}$ in the analogous way, except we lift the arcs in the order $\beta_{i}, \beta_{i-1}, \ldots, \beta_1, \beta_\ell, \beta_{\ell-1}, \ldots, \beta_{i+1}$.   In either case, the lifted arcs are again determined by the edge they enter and order that intersect that edge.  To represent the lift of $\delta^{\pm 1}$ as an element of $C_1(S)$, we simply take, for each lifted arc $\widetilde{\beta}_i$, the clockwise sequence of edges between the initial vertices of the edge where $\widetilde{\beta}_i$ starts and the edge where $\widetilde{\beta}_i$ ends.

\subsection{Checking finite index}
Now, given a finitely generated subgroup $\Delta  \subseteq \PSL_2(\Z)$, we outline a procedure to determine whether $\Delta$ has finite index (and hence we get such a procedure for $\SL_2(\Z)$).  The idea is to exploit the fact that the the level 2 principal congruence subgroup $\Gamma(2)$ is freely generated by the matrices  $\left( \begin{smallmatrix} 1 & 2 \\ 0 & 1 \end{smallmatrix}\right)$ and $\left( \begin{smallmatrix} 1 & 0 \\ 2 & 1 \end{smallmatrix}\right)$.  Our procedure goes as follows:
\begin{enumerate}
\item Compute a generating set of $\Delta \cap \Gamma(2)$ using Schreier's lemma. \\

\noindent Fix a generating set $T \subseteq \Delta$.  Let $\Delta' = \Delta \cap \Gamma(2)$, and fix a right transversal $R$ of $\Delta'$ in $\Delta$, i.e.\ a set of representatives of the right cosets $\Delta' \backslash \Delta$.  Given $\delta \in \Delta$, let $\overline{\delta}$ denote the fixed representative of the coset containing $\delta$.  Then Schreier's lemma says that $\Delta'$ is generated by the set
\begin{equation*}
\{ rt\left(\overline{rt}\right)^{-1} \mid r \in R, t \in T\}.
\end{equation*}
If we let $\overline{\Delta}$ denote the image of $\Delta$ in $\PSL(2,\Z/2\Z)$, then finding the transversal $R$ amounts to finding one element in each fiber over $\overline{\Delta}$.  In particular this can be done by simply iterating over all words in the generating set $T$. \\

\item Express each generator of $\Delta \cap \Gamma(2)$ as a word in $\left( \begin{smallmatrix} 1 & 2 \\ 0 & 1 \end{smallmatrix}\right)$ and $\left( \begin{smallmatrix} 1 & 0 \\ 2 & 1 \end{smallmatrix}\right)$ using the algorithm given by Chorna, Geller, and Shpilrain in \cite{CGS}. \\

\noindent Given a matrix $M = (m_{ij}) \in \Gamma(2)$, we call $\sum_{ij} \v m_{ij} \v$ the \emph{size} of $M$.  In \cite[Theorem~4]{CGS}, the authors prove that one can always reduce the size of $M$ by left or right multiplication by $\left( \begin{smallmatrix} 1 & 2 \\ 0 & 1 \end{smallmatrix}\right)^{\pm 1}$ or $\left( \begin{smallmatrix} 1 & 0 \\ 2 & 1 \end{smallmatrix}\right)^{\pm 1}$.  Repeating this process inductively, it will eventually terminate at the identity matrix, presenting $M$ as a word in our free generators. \\

\item Compute the index of $\Delta \cap \Gamma(2)$ in $\Gamma(2)$.  \\

\noindent Once our generators of $\Delta \cap \Gamma(2)$ are written as words in $\left( \begin{smallmatrix} 1 & 2 \\ 0 & 1 \end{smallmatrix}\right)$ and $\left( \begin{smallmatrix} 1 & 0 \\ 2 & 1 \end{smallmatrix}\right)$, we apply the Todd-Coxeter procedure to compute the index (see e.g. \cite[Ch.~5]{Holt-Bettina-OBrien}).  We perform this calculation using the implementation of the Todd-Coxeter procedure in GAP \cite{GAP4}.
\end{enumerate}

\subsection{Results}

As mentioned above, we finish the proof of Theorem \ref{thm:main} by computing the action of several partial rotations in the remaining 12 cases.  In particular, for each case we carry out the following steps:
\begin{enumerate}
\item Build the homology group $H_1(S;\Q)$ using the cell structure described above.
\item Find a set of simple closed curves on $S^\circ/G$ for which we can lift a power of each each Dehn twist to a partial rotation.
\item Compute the action of each partial rotation on $H_1(S;\Q)$ using the algorithm described above.
\item Apply a change of basis to obtain a set of matrices in $\SL_2(\Z)$.
\item Check that these generate a finite index subgroup of $\SL_2(\Z)$ using the procedure defined above.
\end{enumerate}
In Table \ref{table:results}, we provide, for each case, a set of pairs $(\delta,k)$ so that the set of partial rotations $\widetilde{T}_\delta^k$ generate a finite index subgroup of $\G_V(\Z)$.  Each such set was found experimentally.  In the table, we write each curve $\delta$ as an element of $\pi_1(S^\circ/G)$ using our standard presentation
\begin{equation*}
\la c_1, d_1, \ldots, c_h, d_h, x_1, \ldots, x_n \mid [c_1,d_1] \cdots [c_h,d_h]x_1 \cdots x_n = 1\ra.
\end{equation*}

\begin{table}
\center
\begin{tabular}{|c|c|c|c|m{3.5cm}|}
\hline
Case & $G$ & Branching data & Monodromy & Partial rotations \\
\hline
2.k.2 & $D_3$ & $(2^2,3^2)$  & $(s,s,r,r^{-1})$ & $(x_1x_2, 1)$, $(x_1x_2x_3x_2^{-1}, 2)$, $(x_2x_3,2)$ \\
\hline
2.n & $D_4$ & $(2^3,4)$  & $(s,sr,r^2,r)$ & $(x_1x_2x_3x_2^{-1}, 1)$, $(x_2x_3, 1)$, \newline $(x_1x_2,2)$ \\
\hline
2.s & $D_6$ & $(2^3,3)$  & $(s,sr,r^3,r^2)$ & $(x_1x_2x_3x_2^{-1}, 1)$, $(x_2x_3, 1)$, \newline $(x_1x_2, 3)$ \\
\hline
3.i.1 & $C_4$ & $(1;2^2)$ & $(c,1,c^2,c^2)$ & $(d_1, 1)$, \newline $(c_1^2d_1x_1, 2)$ \\
\hline
3.m & $D_3$ & $(2^4,3)$ & $(s,s,s,sr^{-1},r)$ & $(x_1x_2,1)$, $(x_1x_2x_3x_2^{-1},1)$, $(x_2x_3, 1)$, \newline $(x_1x_2x_3x_4x_3^{-1}x_2^{-1}, 3)$, $(x_2x_3x_4x_3^{-1},3)$, $(x_3x_4, 3)$ \\
\hline
3.n & $D_3$ & $(1; 3)$ & $(s,sr,r)$ & $(c_1, 2)$, \newline $(d_1, 2)$ \\
\hline
3.q.2 & $D_4$ & $(2^2,4^2)$ & $(s,s,r^{-1},r)$ & $(x_1x_2,1)$, $(x_1x_2x_3x_2^{-1},2)$, $(x_2x_3,2)$ \\
\hline
3.r.2 & $D_4$ & $(2^5)$ & $(s,s,sr, sr^3, r^2)$ & $(x_1x_2, 2)$, \newline $(x_3x_4,2)$, $(x_2x_3x_4, 2)$, $(x_3x_4x_1, 2)$, $(x_3x_1x_2, 2)$, $(x_1x_2x_3, 2)$ \\
\hline
3.s.2 & $D_4$ & $(1;2)$ & $(s,sr,r^2)$ & $(c_1, 2)$, \newline $(d_1, 2)$ \\
\hline
3.y & $D_6$ & $(2^3,6)$ & $(s,sr^2,r^3,r)$ & $(x_1x_2x_3x_2^{-1}, 1)$, $(x_2x_3, 1)$, \newline $(x_1x_2, 3)$ \\
\hline
3.z & $A_4$ & $(2^2,3^2)$ & $((12)(34),(12)(34),(123),(321))$ & $(x_1x_2, 1)$, $(x_1x_2x_3x_2^{-1}, 3)$, $(x_2x_3, 3)$ \\
\hline
3.al & $S_4$ & $(2^3,3)$ & $((12),(23),(13)(24),(243))$ & $(x_1x_2x_3x_2^{-1}, 1)$, $(x_2x_3, 2)$, \newline $(x_1x_2, 3)$ \\
\hline
\end{tabular}
\caption{A set of partial rotations generating an arithmetic subgroup in the 12 remaining cases.}
\label{table:results}
\end{table}

%-------------------------------------------------------------------------------------------------------------------------------------------------------------------

\newpage
\printbibliography

\end{document}